\newtheorem{theorem}{Theorem}[section] 
\newtheorem{lemma}[theorem]{Lemma}     
\newtheorem{definition}[theorem]{Definition}
\newtheorem{corollary}[theorem]{Corollary}
\numberwithin{equation}{section}
\renewcommand{\H}{\mathbb H}
\newcommand{\N}{\mathbb N}
\newcommand{\R}{\mathbb R}
\newcommand{\Fi}{\mathbf \Phi}
\newcommand{\Ffi}{\mathbf \phi}
\newcommand{\x}{\mathbbm x}
\newcommand{\y}{\mathbbm y}
\newcommand{\z}{\mathbbm z}
\newcommand{\e}{\mathbbm e}
\newcommand{\h}{\mathbbm h}
\newcommand{\C}{\mathbbm c}
\newcommand{\0}{\mathbf 0}
\newcommand*{\ip}[1]{\left\langle{#1}\right\rangle} 
\newcommand*{\norm}[1]{\left\lVert{#1}\right\rVert} 
\newcommand*{\conj}[1]{\overline{#1}} 
\newcommand{\imq}{\mathrm{Im}}
\newcommand{\req}{\mathrm{Re}}
\renewcommand{\r}{\mathbf r}
\renewcommand{\i}{\mathbf i}
\renewcommand{\j}{\mathbf j}
\renewcommand{\k}{\mathbf k}
\DeclareMathOperator*{\argmin}{arg\,min}
\newcommand{\supp}{\mathrm {supp}}
\begin{document}

\begin{center}
\Large
\textbf{Compressed sensing for real measurements of quaternion signals}\\
\end{center}

% \medskip

\begin{center}
\begin{tabular}{cc}
	\textbf{Agnieszka Bade\'nska} & \textbf{{\L}ukasz B{\l}aszczyk} \\
	Faculty of Mathematics  & Institute of Radioelectronics \\ and Information Science & and Multimedia Technology \\
	Warsaw University of Technology & Warsaw University of Technology \\
	ul. Koszykowa 75 & ul. Nowowiejska 15/19 \\
	00-662 Warszawa & 00-665 Warszawa\\
	Poland & Poland \\
	badenska@mini.pw.edu.pl & l.blaszczyk@ire.pw.edu.pl
\end{tabular}
\end{center}

\bigskip\noindent
\textbf{Keywords}: compressed sensing, quaternion, restricted isometry property, sparse signals.

\bigskip
\normalsize
\begin{abstract}
The~article concerns compressed sensing methods in the~quaternion algebra. We prove that it is possible to uniquely reconstruct -- by $\ell_1$ norm minimization -- a~sparse quaternion signal from a~limited number of its real linear measurements, provided the~measurement matrix satisfies so-called restricted isometry property with a~sufficiently small constant. We also provide error estimates for the~reconstruction of a~non-sparse quaternion signal in the~noisy and noiseless cases.
\end{abstract}

\section{Introduction}

The~idea of compressed sensing is to recover a~sparse (supported on a~set of small cardinality) finite dimensional signal~$\x$ from a~few number of its linear measurements $\y=\Fi\x$, by solving the~convex program of $\ell_1$ norm minimization:
$$ \min\norm{\z}_1 \quad\text{subject to}\quad \Fi\z=\y. $$
It is well known that the~exact recovery is possible if the~measurement matrix $\Fi$ satisfies a~condition known as the~restricted isometry property (Definition~\ref{RIP}), introduced in~\cite{ct}, with sufficiently small constant (see e.g. \cite{cRIP,crt,ct} and \cite{introCS} for more references). Moreover, even if the~original signal is not sparse but e.g. compressible (most of its entries close to zero), the~same minimization provides a~good sparse approximation of the~signal and the~procedure is stable in the~sense that the~error is bounded above by the~$\ell_1$~norm of the~difference between the~original signal and its best sparse approximation.

More general, one can assume that the~observables are contaminated by a~white noise,
$$ \y=\Fi\x+\e, \quad\textrm{where}\quad \norm{\e}_2\leq\eta. $$
The~exact recovery is obviously impossible, however, if the~signal $\x$ was sparse, we still are able to reconstruct it in a~stable manner, i.e. with an~error bounded in terms of~$\eta$. To do so one solves a~modified convex problem
$$ \min\norm{\z}_1 \quad\text{subject to}\quad \norm{\y-\Fi\z}_2\leq\eta. $$

So far the~attention of researchers in compressed sensing has mostly been focused on the~case of real and complex signals and measurements. Our aim is to investigate if the~compressed sensing methods can be successfully applied also to quaternion signals. This generalization would be significant because of broad applications of the~quaternion algebra. Apart from classical applications, e.g. in quantum mechanics, for the~description of 3D solid body rotations, etc., quaternions have also been used in the~field of signal processing. Their structure is suitable for description of a~colour image -- the~imaginary part is interpreted in terms of three components of a~colour image: red, green and blue. That is why quaternions have found numerous applications in image filtering, pattern recognition, edge detection and watermarking \cite{G1,P1,T1,W1}. There has also been proposed a~dual-tree quaternion wavelet transform in a~multiscale analysis of geometric image features~\cite{ccb}. For this purpose an~alternative representation of quaternions is used -- through its magnitude (norm) and three phase angles: two of them encode phase shifts while the~third contains image texture information. 

The~motivation for this work was article~\cite{l1minqs}, the~authors of which performed a~numerical experiment of a~successful recovery of sparse quaternion signals from a~limited amount of their random Gaussian quaternion measurements. There has also been proposed an~algorithm for solving the~$\ell_1$ minimization problem in the~algebra of quaternions (by using the~second-order cone programming). However, to the~authors' best knowledge, so far in the~literature there has been no proof for any compressed sensing methods in the~algebra of quaternions.

In this article we deal with the~case of quaternion signals and their linear measurements with real coefficients. The~main results, stated in Theorem~\ref{l1minthm} and Corollary~\ref{l1mincor}, confirm the~numerical experiments from~\cite{l1minqs} and provide estimates on the~error for the~problem of reconstruction of a~quaternion signal (not necessarily sparse) from noisy and noiseless data by minimization of the~$\ell_1$ quaternion norm -- under the~condition that the~real measurements matrix satisfies the~restricted isometry property with a~sufficiently small constant. This is a~starting point for further research, i.e. investigating the~case of quaternion measurements of quaternion signals, search for 'good' measurement quaternion matrices, etc. 

The article is organized as follows. In the~next section we recall basic properties of quaternions and provide two versions of polarization identity. Section~3 is devoted to the~restricted isometry property and its consequences -- we prove Lemma~\ref{RIPip} which is an~important tool in the~proof of our main results. In the~sections~4 and~5 we state and prove the~main results -- Theorem~\ref{l1minthm} and Corollary~\ref{l1mincor}. Finally, section~6 presents results of a~numerical experiment for the~considered case, i.e. reconstruction (by $\ell_1$ minimization) of sparse quaternion signals from their linear measurements with real coefficients and error estimation for non-sparse quaternion signals giving a~lower bound on the~constant~$C_0$ from Corollary~\ref{l1mincor}.

\section{Algebra of quaternions}

Denote by~$\H$ the~algebra of quaternions
$$ q=a+b\i+c\j+d\k, \quad \textrm{where}\quad a,b,c,d\in\R $$
endowed with the~standard norm
$$ |q|=\sqrt{q\conj{q}}=\sqrt{a^2+b^2+c^2+d^2}, $$
where $\conj{q}=a-b\i-c\j-d\k$ is the~conjugate of~$q$. The~real part $\req(q)$ of $q=a+b\i+c\j+d\k$ is the~real number~$a$ while quaternion $b\i+c\j+d\k$ is called the~imaginary party of~$q$ and denoted by $\imq(q)$. The~conjugate of~$q$ can also be expressed as
$$ \conj{q}=-\frac{1}{2}\left(q+\i{q}\i+\j{q}\j+\k{q}\k\right). $$
Recall that multiplication is in general not commutative in the~quaternion algebra and is defined by the~following rules
$$ \i^2=\j^2=\k^2=\i\j\k=-1 $$
and
$$ \i\j=-\j\i=\k, \quad \j\k=-\k\j=\i, \quad \k\i=-\i\k=\j. $$
However, we have the~property that
$$ \conj{q\cdot w}=\conj{w}\cdot\conj{q} \quad\textrm{for any}\quad q,w\in\H. $$

For any $n\in\N$ we introduce the~following function $\ip{\cdot,\cdot}\colon\H^n\times\H^n\to\H$ with quaternion values:
$$ \ip{\x,\y}=\sum_{i=1}^{n}x_i\conj{y_i}, \quad\textrm{where}\quad \x=(x_1,\ldots,x_n)^T,\; \y=(y_1,\ldots,y_n)^T\in\H^n $$
and $T$ is the~transpose. Denote also
$$ \norm{\x}_2=\sqrt{\ip{\x,\x}}=\sqrt{\sum_{i=1}^{n}|x_i|^2}, \quad \textrm{for any }\x=(x_1,\ldots,x_n)^T\in\H^n. $$
As a~direct consequence of the~following lemma, $\norm{\cdot}_2$ is a~norm in~$\H^n$.

\begin{lemma}\label{ipproperties} The~function $\ip{\cdot,\cdot}$ satisfies axioms of the~inner product.
\end{lemma}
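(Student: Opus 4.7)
The plan is to check, in turn, the axioms of a quaternion-valued inner product for the map $\ip{\cdot,\cdot}$. Because $\H$ is noncommutative one first has to fix a convention: I aim to prove (i) additivity in each argument, (ii) conjugate symmetry $\ip{\x,\y}=\conj{\ip{\y,\x}}$, (iii) left-homogeneity in the first slot (which, combined with (ii), yields right-conjugate-homogeneity in the second slot), and (iv) positive-definiteness. These are the standard axioms one imposes on a Hermitian form over $\H$.

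Additivity is immediate from the definition $\ip{\x,\y}=\sum_{i=1}^n x_i\conj{y_i}$, using distributivity of quaternion multiplication over addition and the additivity of conjugation. Conjugate symmetry reduces to the identity $\conj{q\cdot w}=\conj{w}\cdot\conj{q}$ recalled above, applied termwise:
$$ \conj{\ip{\y,\x}}=\sum_{i=1}^n\conj{y_i\conj{x_i}}=\sum_{i=1}^n x_i\conj{y_i}=\ip{\x,\y}. $$
For homogeneity I would compute directly $\ip{q\x,\y}=\sum_{i=1}^n(qx_i)\conj{y_i}=q\ip{\x,\y}$ for $q\in\H$, and then deduce $\ip{\x,q\y}=\ip{\x,\y}\conj{q}$ from conjugate symmetry; this is the correct form of homogeneity in the second slot over a noncommutative scalar field.

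Positive-definiteness is the only axiom really worth isolating: $\ip{\x,\x}=\sum_{i=1}^n x_i\conj{x_i}=\sum_{i=1}^n|x_i|^2$ is a sum of nonnegative real numbers, so it lies in $\R\subset\H$, is nonnegative, and vanishes exactly when every $x_i=0$, i.e.\ when $\x=\0$. As a by-product this justifies the definition $\norm{\x}_2=\sqrt{\ip{\x,\x}}$ stated just before the lemma and shows that it is a genuine norm on $\H^n$.

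I do not anticipate any serious obstacle here. The whole difficulty, such as it is, consists in bookkeeping which side quaternionic scalars are multiplied on, since they do not commute with $\conj{y_i}$; but because conjugation reverses the order of factors, all identities familiar from the complex Hermitian setting carry over once one is consistent about which argument is conjugate-linear.
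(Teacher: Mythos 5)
Your proposal is correct and follows essentially the same route as the paper: termwise verification of conjugate symmetry via $\conj{qw}=\conj{w}\,\conj{q}$, left-homogeneity in the first slot, additivity, and positive-definiteness from $\ip{\x,\x}=\sum_i|x_i|^2$. The extra remark deducing $\ip{\x,q\y}=\ip{\x,\y}\conj{q}$ is a harmless addition the paper leaves implicit.
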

\begin{proof}
	Let $\x=(x_1,\ldots,x_n)^T,\y=(y_1,\ldots,y_n)^T,\z=(z_1,\ldots,z_n)^T\in\H^n$ and $\lambda\in\H$.
	\begin{itemize}
		\item $\displaystyle \conj{\ip{\x,\y}}=\conj{\sum_{i=1}^{n}x_i\conj{y_i}}=\sum_{i=1}^{n}\conj{x_i\conj{y_i}}=\sum_{i=1}^{n}y_i\conj{x_i}=\ip{\y,\x}$ .
		\item $\displaystyle \ip{\lambda\x,\y}=\sum_{i=1}^{n}\lambda x_i\conj{y_i}=\lambda\sum_{i=1}^{n}x_i\conj{y_i}=\lambda\ip{\x,\y}$.
		\item $\displaystyle \ip{\x+\y,\z}=\sum_{i=1}^{n}(x_i+y_i)\conj{z_i}=\sum_{i=1}^{n}x_i\conj{z_i}+\sum_{i=1}^{n}y_i\conj{z_i}=\ip{\x,\z}+\ip{\y,\z}$.
		\item $\displaystyle \ip{\x,\x}=\sum_{i=1}^{n}x_i\conj{x_i}=\sum_{i=1}^{n}|x_i|^2=\norm{\x}_2^2\geq0$.
		\item $\displaystyle \ip{\x,\x}=\norm{\x}_2^2=0 \quad \iff \quad \x=0$.
	\end{itemize}
\end{proof}

We have also the~Cauchy-Schwarz inequality.
\begin{lemma}\label{qCSineq}
	For any $\x,\y\in\H^n$,
	$$ |\ip{\x,\y}|\leq\norm{\x}_2\cdot\norm{\y}_2. $$
\end{lemma}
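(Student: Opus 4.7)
The plan is to mimic the classical Hilbert-space proof of Cauchy--Schwarz based on the positivity $\ip{\x-\lambda\y,\x-\lambda\y}\geq 0$, being careful to respect the order of factors imposed by non-commutativity of $\H$.

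First I would dispose of the trivial case $\y=\0$, where both sides vanish. Assuming $\y\neq\0$, the next preliminary step is to derive a kind of ``right'' additivity/conjugate-homogeneity in the second slot of $\ip{\cdot,\cdot}$. Lemma~\ref{ipproperties} gives only left $\H$-linearity in the first slot plus conjugate symmetry, so for a quaternion scalar $\lambda$ I would combine these to obtain, for any $\u,\y\in\H^n$,
$$ \ip{\u,\lambda\y}=\conj{\ip{\lambda\y,\u}}=\conj{\lambda\ip{\y,\u}}=\conj{\ip{\y,\u}}\,\conj{\lambda}=\ip{\u,\y}\,\conj{\lambda}, $$
and similarly additivity in the second argument. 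Note that the scalar ends up on the \emph{right} with a conjugate, which is forced by the order reversal in $\conj{q\cdot w}=\conj{w}\cdot\conj{q}$.

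With these rules in hand I would expand, for an arbitrary $\lambda\in\H$,
$$ 0\leq\ip{\x-\lambda\y,\x-\lambda\y}=\norm{\x}_2^2-\lambda\ip{\y,\x}-\ip{\x,\y}\conj{\lambda}+\lambda\norm{\y}_2^2\conj{\lambda}. $$
Here I use crucially that $\norm{\y}_2^2$ is a real number and therefore commutes with every quaternion, so the last term equals $|\lambda|^2\norm{\y}_2^2$. Then I would specialise $\lambda:=\ip{\x,\y}/\norm{\y}_2^2$, which is well-defined since $\norm{\y}_2^2$ is a positive real scalar. Using $\ip{\y,\x}=\conj{\ip{\x,\y}}$ the two middle terms each simplify to $|\ip{\x,\y}|^2/\norm{\y}_2^2$, and the last term simplifies to the same expression, yielding
$$ 0\leq\norm{\x}_2^2-\frac{|\ip{\x,\y}|^2}{\norm{\y}_2^2}, $$
from which the claim follows after multiplying by $\norm{\y}_2^2$ and taking square roots.

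The only real obstacle is bookkeeping around non-commutativity: one must never ``slide'' $\lambda$ past a quaternion-valued inner product except through $\conj{q\cdot w}=\conj{w}\conj{q}$, and the choice of $\lambda$ must be made so that its coefficient $\norm{\y}_2^2$ is real. Once the second-slot rule $\ip{\u,\lambda\y}=\ip{\u,\y}\conj{\lambda}$ is established, the rest of the argument is a routine expansion.
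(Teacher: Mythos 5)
Your proof is correct and follows essentially the same route as the paper: expanding $\ip{\x-q\y,\x-q\y}\geq 0$ with careful attention to the order of quaternion factors and then specialising $q=\ip{\x,\y}/\norm{\y}_2^2$. The only difference is that you make explicit the second-slot rule $\ip{\cdot,\lambda\y}=\ip{\cdot,\y}\conj{\lambda}$, which the paper uses implicitly in its expansion; this is a harmless (indeed welcome) extra step.
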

\begin{proof}
	We carefully follow the~classical steps of the~proof, using the~above properties and keeping the~order of terms in multiplication. Take any $\x,\y\in\H^n$ and $q\in\H$. If $\y=0$ we are done, hence assume that $\norm{\y}_2>0$. By Lemma~\ref{ipproperties} we have that
	$$\aligned 0\leq\ip{\x-q\y,\x-q\y}&=\ip{\x,\x}-q\ip{\y,\x}-\ip{\x,\y}\conj{q}+q\ip{\y,\y}\conj{q}\\
&=\norm{x}_2^2-q\conj{\ip{\x,\y}}-\ip{\x,\y}\conj{q}+|q|^2\norm{\y}_2^2. \endaligned$$
	Putting $q=\frac{\ip{\x,\y}}{\norm{y}^2}$ we get
	$$ 0\leq\norm{x}_2^2-\frac{\ip{\x,\y}\conj{\ip{\x,\y}}}{\norm{y}_2^2}-\frac{\ip{\x,\y}\conj{\ip{\x,\y}}}{\norm{y}_2^2}+\frac{|\ip{\x,\y}|^2}{\norm{y}_2^2}= 
			\norm{x}_2^2-\frac{|\ip{\x,\y}|^2}{\norm{y}_2^2}, $$
	which gives the result.
\end{proof}

The~function $\ip{\cdot,\cdot}$ is not a~standard inner product since its values are quaternions. However, we are able to obtain for it the~following versions of polarization identity.

\begin{theorem}[Polarization identity I]\label{polar1} For any $\x,\y\in\H^n$ we have
$$ \aligned \ip{\x,\y} &= \frac{1}{4}\left(\norm{\x+\y}_2^2-\norm{\x-\y}_2^2\right)\\	
											 &+\frac{\i}{4}\left(\norm{\x+\i\y}_2^2-\norm{\x-\i\y}_2^2\right)	\\
											 &+\frac{\j}{4}\left(\norm{\x+\j\y}_2^2-\norm{\x-\j\y}_2^2\right)	\\
											 &+\frac{\k}{4}\left(\norm{\x+\k\y}_2^2-\norm{\x-\k\y}_2^2\right). \endaligned $$
\end{theorem}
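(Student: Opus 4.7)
My plan is to mirror the classical polarization proof for complex inner product spaces, but with careful attention to the non-commutativity of~$\H$. For each $\alpha\in\{1,\i,\j,\k\}$ I would expand the two squared norms $\norm{\x\pm\alpha\y}_2^2$ via the inner product $\ip{\cdot,\cdot}$, take their difference, and then check that $\frac{\alpha}{4}\bigl(\norm{\x+\alpha\y}_2^2-\norm{\x-\alpha\y}_2^2\bigr)$ extracts precisely the $\alpha$-component of $\ip{\x,\y}$.

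First I would use additivity and left-linearity from Lemma~\ref{ipproperties} to write
\begin{equation*}
\norm{\x\pm\alpha\y}_2^2 \;=\; \ip{\x,\x}\pm\ip{\x,\alpha\y}\pm\ip{\alpha\y,\x}+\ip{\alpha\y,\alpha\y}.
\end{equation*}
Left-linearity gives $\ip{\alpha\y,\x}=\alpha\conj{\ip{\x,\y}}$, and then conjugate-symmetry yields $\ip{\x,\alpha\y}=\conj{\ip{\alpha\y,\x}}=\ip{\x,\y}\conj{\alpha}$; the subtle point is that in the second expression the scalar lands on the \emph{right}. Since $\alpha\conj{\alpha}=1$, also $\ip{\alpha\y,\alpha\y}=\alpha\norm{\y}_2^2\conj{\alpha}=\norm{\y}_2^2$. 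Setting $q:=\ip{\x,\y}$, the $\ip{\x,\x}$ and $\ip{\alpha\y,\alpha\y}$ terms cancel in the difference, giving
\begin{equation*}
\frac{\alpha}{4}\bigl(\norm{\x+\alpha\y}_2^2-\norm{\x-\alpha\y}_2^2\bigr)\;=\;\frac{1}{2}\bigl(\alpha\,q\,\conj{\alpha}+\alpha^2\conj{q}\bigr).
\end{equation*}

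Summing over $\alpha\in\{1,\i,\j,\k\}$, the contribution of the second piece is immediate: $\sum_\alpha\alpha^2=1-3=-2$, yielding $-\conj{q}$. For the first piece I would invoke the identity $\conj{q}=-\frac{1}{2}(q+\i q\i+\j q\j+\k q\k)$ already recorded in the preliminaries, which rearranges to $-\i q\i-\j q\j-\k q\k=2\conj{q}+q$, so that $\sum_\alpha\alpha\,q\,\conj{\alpha}=q-\i q\i-\j q\j-\k q\k=2q+2\conj{q}$. Combining these, the right-hand side of the polarization identity collapses to $\frac{1}{2}(2q+2\conj{q})-\conj{q}=q$, as required.

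The main obstacle is not conceptual but a bookkeeping hazard caused by non-commutativity: one must correctly derive $\ip{\x,\alpha\y}=q\conj{\alpha}$ (not $\conj{\alpha}\,q$) and keep each scalar on its proper side throughout. Once this is handled, the preliminary identity for $\conj{q}$ closes the calculation in a single line. A more pedestrian alternative would bypass that identity and verify componentwise that, for $q=a+b\i+c\j+d\k$, the expression $\alpha\,q\,\conj{\alpha}+\alpha^2\conj{q}$ equals twice the $\alpha$-component of~$q$ for each of the four choices of~$\alpha$---this is longer but purely mechanical via the multiplication table.
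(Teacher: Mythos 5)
Your argument is correct and follows essentially the same route as the paper's proof: expand $\norm{\x\pm\alpha\y}_2^2$, isolate the cross terms $q\conj{\alpha}+\alpha\conj{q}$ with $q=\ip{\x,\y}$, multiply by $\alpha$, and close the computation with the identity $\conj{q}=-\frac{1}{2}\left(q+\i q\i+\j q\j+\k q\k\right)$ from the preliminaries. The only difference is organizational (you treat $\alpha=1$ uniformly with the imaginary units and work via the inner-product axioms rather than componentwise sums), and your handling of the non-commutativity, with $\ip{\x,\alpha\y}=q\conj{\alpha}$ placing the scalar on the right, is exactly right.
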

\begin{proof}
Denote $\x=(x_1,\ldots,x_n)^T$, $\y=(y_1,\ldots,y_n)^T$ and let us begin with the~real part.
$$\aligned \norm{\x+\y}^2-\norm{\x-\y}^2&=\sum_{i=1}^{n}\big((x_i+y_i)(\conj{x_i}+\conj{y_i})-(x_i-y_i)(\conj{x_i}-\conj{y_i})\big)\\
																		&=\sum_{i=1}^{n}\big(x_i\conj{x_i}+x_i\conj{y_i}+y_i\conj{x_i}+y_i\conj{y_i}-x_i\conj{x_i}+x_i\conj{y_i}+y_i\conj{x_i}-y_i\conj{y_i}\big)\\
																		&=2\sum_{i=1}^{n}\big(x_i\conj{y_i}+y_i\conj{x_i}\big)\\
																		&=2\big(\ip{\x,\y}+\ip{\y,\x}\big)=2\left(\ip{\x,\y}+\conj{\ip{\x,\y}}\right)=4\,\req\left(\ip{\x,\y}\right). \endaligned $$

Now, the~term with the~imaginary unit $\i$.
$$\aligned \norm{\x+\i\y}^2-\norm{\x-\i\y}^2&=\sum_{i=1}^{n}\left((x_i+\i y_i)\left(\conj{x_i}+\conj{\i y_i}\right)-(x_i-\i y_i)\left(\conj{x_i}-\conj{\i y_i}\right)\right)\\
																				&=\sum_{i=1}^{n}\left(x_i\conj{x_i}+x_i\conj{\i y_i}+\i y_i\conj{x_i}+\i y_i\conj{\i y_i}-x_i\conj{x_i}+x_i\conj{\i y_i}+\i y_i\conj{x_i}-\i y_i\conj{\i y_i}\right)\\
																				&=2\sum_{i=1}^{n}\big(-x_i\conj{y_i}\i+\i y_i\conj{x_i}\big)\\
																			&=-2\big(\ip{\x,\y}\i-\i\ip{\y,\x}\big)=-2\big(\ip{\x,\y}\i-\i\conj{\ip{\x,\y}}\big). \endaligned $$
Analogously for the~remaining imaginary units:
$$ \norm{\x+\j\y}_2^2-\norm{\x-\j\y}_2^2=-2\big(\ip{\x,\y}\j-\j\conj{\ip{\x,\y}}\big) $$
and
$$ \norm{\x+\k\y}_2^2-\norm{\x-\k\y}_2^2=-2\big(\ip{\x,\y}\k-\k\conj{\ip{\x,\y}}\big). $$
Multiplying the~left hand sides by $\i,\j,\k$ respectively and summing up the~identities we obtain that
$$\aligned \i\left(\norm{\x+\i\y}_2^2-\norm{\x-\i\y}_2^2\right)&+\j\left(\norm{\x+\j\y}_2^2-\norm{\x-\j\y}_2^2\right)+\k\left(\norm{\x+\k\y}_2^2-\norm{\x-\k\y}_2^2\right) \\
						&=-2\left(\i\ip{\x,\y}\i+\j\ip{\x,\y}\j+\k\ip{\x,\y}\k\right)-6\conj{\ip{\x,\y}}\\
						&=-2\left(-2\conj{\ip{\x,\y}}-\ip{\x,\y}\right)-6\conj{\ip{\x,\y}}\\
						&=2\ip{\x,\y}-2\conj{\ip{\x,\y}}=4\,\imq\left(\ip{\x,\y}\right), \endaligned $$
which finishes the proof.
\end{proof}

In order to formulate the~second version of the~last result, let us introduce a~different representation of a~quaternion $q\in\H$:
$$ q=x+uy, \quad \textrm{where}\quad x,y\in\R,\; y\geq0, \; u\in\H, \;\req(u)=0, \; |u|=1. $$
Then obviously
$$ x=\req(q), \quad uy=\imq(q)\quad \textrm{and}\quad |q|^2=x^2+y^2. $$
Since $\req(u)=0$ and $|u|=1$, we also have
$$ \conj{u}=-u \quad\textrm{and}\quad 1=u\,\conj{u}=u\,(-u)=-u^2. $$
Any quaternion with these properties can also be called an~imaginary unit (cf.~\cite{vc}).

\begin{lemma}\label{urepresentation} For any $q\in\H$ with $q=x+uy$, where $x,y\in\R$, $y\geq0$, $u\in\H$, $\req(u)=0$, $|u|=1$, we have
$$ \conj{u}\,q\,u=q \quad \textrm{and}\quad \conj{u}\,\conj{q}\,u=\conj{q}. $$
\end{lemma}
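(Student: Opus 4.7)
The plan is a direct computation exploiting the fact that the only non-commutativity we have to worry about is between $u$ and itself, since $x$ and $y$ are real and therefore commute with every quaternion. First I would record the two identities already noted in the paragraph preceding the lemma, namely $\conj{u}=-u$ and $u^2=-1$, both of which follow from $\req(u)=0$ and $|u|=1$.

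For the first equality, I would compute $qu$ as
\[
qu = (x+uy)u = xu + uyu = xu + yu^2 = xu - y,
\]
where in the middle step I used $uy=yu$ because $y\in\R$. Then
\[
\conj{u}\,q\,u = (-u)(xu-y) = -xu^2 + yu = x + uy = q.
\]
The second equality follows by exactly the same manipulation applied to $\conj{q}=x-uy$, or alternatively by taking the conjugate of the first identity and using $\conj{q\cdot w}=\conj{w}\cdot\conj{q}$ together with $\conj{\conj{u}}=u$: from $\conj{u}qu=q$ one gets $\conj{u}\,\conj{q}\,u = \conj{\conj{u}\,q\,u\,} = \conj{q}$, which is a one-line alternative I would probably prefer as it avoids repeating the computation.

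There is essentially no obstacle here; the only thing worth being careful about is keeping the order of multiplication strictly correct (since $\H$ is non-commutative) and making sure to invoke commutativity only for real scalars. Everything else is bookkeeping with $u^2=-1$ and $\conj{u}=-u$.
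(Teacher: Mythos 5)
Your proof is correct and follows essentially the same route as the paper: a direct computation relying on the commutativity of the real scalars $x,y$ with all quaternions, the paper using $\conj{u}\,u=|u|^2=1$ where you use the equivalent facts $\conj{u}=-u$ and $u^2=-1$. Your one-line derivation of the second identity by conjugating the first is a nice small economy, but the substance is the same.
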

\begin{proof} Using the~fact that multiplying quaternions by real numbers is commutative, we get that
$$ \conj{u}\,q\,u=\conj{u}(x+uy)u=\conj{u}\,x\,u+\conj{u}\,u\,y\,u=x|u|^2+|u|^2y\,u=x+uy=q. $$
And the~second identity analogously.
\end{proof}

\begin{theorem}[Polarization identity II]\label{polar2} For any $\x,\y\in\H^n$, if we denote $\ip{\x,\y}=a+ub$, where $a,b\in\R$, $b\geq0$, $u\in\H$, $\req(u)=0$, $|u|=1$, we have
$$ \ip{\x,\y}=\frac{1}{4}\left(\norm{\x+\y}_2^2-\norm{\x-\y}_2^2\right)+\frac{u}{4}\left(\norm{\x+u\y}_2^2-\norm{\x-u\y}_2^2\right). $$
\end{theorem}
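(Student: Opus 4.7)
The plan is to follow the same computational scheme as in Theorem \ref{polar1}, but with the generic imaginary unit $u$ replacing $\i,\j,\k$, and then invoke Lemma \ref{urepresentation} to collapse the terms involving $u\ip{\x,\y}u$. The first summand requires no new work: the calculation in the proof of Theorem \ref{polar1} shows that
\[
\tfrac14\bigl(\norm{\x+\y}_2^2-\norm{\x-\y}_2^2\bigr)=\req\bigl(\ip{\x,\y}\bigr)=a,
\]
so what remains is to show that the second summand equals $\imq\bigl(\ip{\x,\y}\bigr)=ub$.

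First I would expand $\norm{\x+u\y}_2^2-\norm{\x-u\y}_2^2$ coordinatewise. Using $\conj{u\,y_i}=\conj{y_i}\,\conj{u}=-\conj{y_i}\,u$, the same cancellations as in the proof of Theorem \ref{polar1} give
\[
\norm{\x+u\y}_2^2-\norm{\x-u\y}_2^2
=2\sum_{i=1}^{n}\bigl(-x_i\conj{y_i}\,u+u\,y_i\conj{x_i}\bigr)
=-2\ip{\x,\y}\,u+2u\,\conj{\ip{\x,\y}}.
\]
Multiplying by $u/4$ and using $u^2=-1$ yields
\[
\tfrac{u}{4}\bigl(\norm{\x+u\y}_2^2-\norm{\x-u\y}_2^2\bigr)
=-\tfrac{1}{2}\,u\,\ip{\x,\y}\,u-\tfrac{1}{2}\,\conj{\ip{\x,\y}}.
\]

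At this point the key observation is that $\ip{\x,\y}=a+ub$ has exactly the form required by Lemma \ref{urepresentation}, so $\conj{u}\,\ip{\x,\y}\,u=\ip{\x,\y}$, and since $\conj{u}=-u$ this is equivalent to $u\,\ip{\x,\y}\,u=-\ip{\x,\y}$. Substituting gives
\[
\tfrac{u}{4}\bigl(\norm{\x+u\y}_2^2-\norm{\x-u\y}_2^2\bigr)
=\tfrac{1}{2}\ip{\x,\y}-\tfrac{1}{2}\conj{\ip{\x,\y}}=\imq\bigl(\ip{\x,\y}\bigr),
\]
and adding the two halves recovers $\ip{\x,\y}=\req(\ip{\x,\y})+\imq(\ip{\x,\y})$.

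The only real subtlety is bookkeeping under noncommutative multiplication — one must never rearrange factors like $x_i\conj{y_i}u$ into $ux_i\conj{y_i}$, which is why the expansion must mimic Theorem \ref{polar1} verbatim. The identity $u\,\ip{\x,\y}\,u=-\ip{\x,\y}$ is the crucial algebraic step, and it is available precisely because the representation $\ip{\x,\y}=a+ub$ was chosen so that the fixed imaginary unit $u$ coincides with the direction of the imaginary part of $\ip{\x,\y}$; this is what reduces the four-term formula of Theorem \ref{polar1} to a two-term formula.
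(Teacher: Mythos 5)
Your proof is correct and follows essentially the same route as the paper's: both compute $\norm{\x+u\y}_2^2-\norm{\x-u\y}_2^2=2u\conj{\ip{\x,\y}}-2\ip{\x,\y}u$, multiply by $u$, and invoke Lemma~\ref{urepresentation} (in your equivalent form $u\ip{\x,\y}u=-\ip{\x,\y}$) to collapse the conjugation term. The only cosmetic difference is that you expand coordinatewise as in Theorem~\ref{polar1} while the paper expands via the sesquilinearity of $\ip{\cdot,\cdot}$; the computation is identical.
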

\begin{proof}
The~form of the~real part was established in the~previous theorem. Using the~fact that $\conj{u}=-u$ we get that
$$ \norm{\x+u\y}_2^2=\norm{\x}_2^2+u\conj{\ip{\x,\y}}-\ip{\x,\y}u+\norm{\y}_2^2 $$
and
$$ \norm{\x-u\y}_2^2=\norm{\x}_2^2-u\conj{\ip{\x,\y}}+\ip{\x,\y}u+\norm{\y}_2^2. $$
Hence, by Lemma~\ref{urepresentation},
$$\aligned u\left(\norm{\x+u\y}_2^2-\norm{\x-u\y}_2^2\right)&=2u^2\,\conj{\ip{\x,\y}}-2u\ip{\x,\y}u=-2\conj{\ip{\x,\y}}+2\,\conj{u}\ip{\x,\y}u\\
																												&=-2\conj{\ip{\x,\y}}+2\ip{\x,\y}=2\left(\ip{\x,\y}-\conj{\ip{\x,\y}}\right)\\
																												&=4\,\imq\left(\conj{\ip{\x,\y}}\right). \endaligned $$
\end{proof}

In what follows we will consider $\norm{\cdot}_p$ norms for quaternion vectors $\x\in\H^n$ defined in the~standard way:
$$ \norm{\x}_{p}=\left(\sum_{i=1}^{n}|x_i|^p\right)^{1/p}, \quad \textrm{for}\quad p\in[1,\infty) $$
and
$$ \norm{\x}_{\infty}=\max_{1\leq i\leq n}|x_i|, $$
where $\x=(x_1,\ldots,x_n)^T$.
We will also apply the~usual notation for the~cardinality of the~support of~$\x$, i.e.
$$ \norm{\x}_0=\#\supp(\x), \quad\textrm{where}\quad \supp(\x)=\{i\in\{1,\ldots,n\}: x_i\neq0\}. $$

\section{Restricted Isometry Property}

Recall that we call a~vector (signal) $\x\in\H^n$ $s$-sparse if it has at most $s$ nonzero coefficients, i.e.
$$ \norm{\x}_0\leq s. $$
As it was mentioned in the~introduction, one of the~conditions which guarantees exact reconstruction of a~sparse real signal from a~few number of its linear measurements is that the~measurement matrix satisfies so-called restricted isometry property (RIP) with a~sufficiently small constant. The~notion of restricted isometry constants was introduced by Cand\`es and Tao in~\cite{ct}.

\begin{definition}\label{RIP}
Let $s\in\N$ and $\Fi\in\R^{m\times n}$. We say that $\Fi$  satisfies the $s$-restricted isometry property (for real vectors) with a~constant $\delta_s\geq0$ if
$$ \left(1-\delta_s\right)\norm{\x}_2^2\leq\norm{\Fi\x}_2^2\leq\left(1+\delta_s\right)\norm{\x}_2^2 $$
for all s-sparse real vectors $\x\in\R^n$. The~smallest number $\delta_s\geq0$ with this property is called the $s$-restricted isometry constant. 
\end{definition}

Note that we can define $s$-isometry constants for any matrix $\Fi\in\R^{m\times n}$ and any number $s\in\{1,\ldots,n\}$. There are various examples of real matrices satisfying RIP (with overwhelming probability), e.g. Bernoulli/Gaussian random matrices with i.i.d. entries or, more general, random sampling matrices associated to bounded orthonormal systems  (cf.~\cite{crt,ct,introCS}). The~following observation allows us to use these matrices also for quaternion signals.

\begin{lemma}
If a~matrix $\Fi\in\R^{m\times n}$  satisfies the $s$-restricted isometry property (for real vectors) with a~constant $\delta_s\geq0$, then it satisfies the $s$-restricted isometry property for quaternion vectors with the~same constant, i.e.
$$ \left(1-\delta_s\right)\norm{\x}_2^2\leq\norm{\Fi\x}_2^2\leq\left(1+\delta_s\right)\norm{\x}_2^2 $$
for all s-sparse quaternion vectors $\x\in\H^n$.
\end{lemma}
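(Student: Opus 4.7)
The plan is to decompose a quaternion vector into its four real components and apply the real RIP to each separately. Write $\x = a + b\i + c\j + d\k$ where $a, b, c, d \in \R^n$ are the coordinate-wise real, $\i$-, $\j$- and $\k$-parts of $\x$. Since $\Fi$ has real entries, matrix-vector multiplication distributes over the quaternion decomposition:
$$ \Fi\x = \Fi a + (\Fi b)\i + (\Fi c)\j + (\Fi d)\k. $$

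The first key observation is that if $\x$ is $s$-sparse in $\H^n$, meaning $\#\supp(\x) \leq s$, then each of $a, b, c, d$ is $s$-sparse in $\R^n$, because $\supp(a), \supp(b), \supp(c), \supp(d) \subseteq \supp(\x)$. The second observation is that the quaternion norm splits component-wise: since $|x_i|^2 = a_i^2 + b_i^2 + c_i^2 + d_i^2$ for every coordinate,
$$ \norm{\x}_2^2 = \norm{a}_2^2 + \norm{b}_2^2 + \norm{c}_2^2 + \norm{d}_2^2, $$
and analogously
$$ \norm{\Fi\x}_2^2 = \norm{\Fi a}_2^2 + \norm{\Fi b}_2^2 + \norm{\Fi c}_2^2 + \norm{\Fi d}_2^2. $$

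Now I would simply apply the real RIP of $\Fi$ (with constant $\delta_s$) to each of the four $s$-sparse real vectors $a, b, c, d$, obtaining four chains $(1-\delta_s)\norm{a}_2^2 \leq \norm{\Fi a}_2^2 \leq (1+\delta_s)\norm{a}_2^2$, etc. Summing these four inequalities and using the two splitting identities above yields
$$ (1-\delta_s)\norm{\x}_2^2 \leq \norm{\Fi\x}_2^2 \leq (1+\delta_s)\norm{\x}_2^2, $$
which is exactly the quaternion $s$-RIP with constant $\delta_s$.

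There is no real obstacle here; the only thing to verify carefully is that the decomposition of $\Fi\x$ into its four real components works as stated, which relies crucially on the entries of $\Fi$ being real (so that they commute with $\i, \j, \k$). If the measurement matrix had quaternion entries, this direct argument would break down, which is consistent with the paper restricting attention to real measurement matrices in this section.
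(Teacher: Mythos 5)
Your proof is correct and follows essentially the same route as the paper: decompose $\x$ into its four real components, observe that each is $s$-sparse and that both $\norm{\x}_2^2$ and $\norm{\Fi\x}_2^2$ split as sums of the corresponding real quantities, then apply the real RIP to each component and sum. No gaps.
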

\begin{proof}
Take any $s$-sparse vector $\x\in\H^n$ and express it as
$$ \x=\x_\r+\i\x_\i+\j\x_\j+\k\x_\k, \quad\textrm{where}\quad \x_i\in\R^n, \; i\in\{\r,\i,\j,\k\}. $$
Notice that all vectors $\x_i\in\R^n$ are also $s$-sparse. Moreover,
$$\Fi\x=\Fi\x_\r+\i\Fi\x_\i+\j\Fi\x_\j+\k\Fi\x_\k,\quad  \textrm{where}\quad \Fi\x_i\in\R^n, \; i\in\{\r,\i,\j,\k\}. $$
Hence 
$$ \norm{\Fi\x}_2^2=\norm{\Fi\x_\r}_2^2+\norm{\Fi\x_\i}_2^2+\norm{\Fi\x_\j}_2^2+\norm{\Fi\x_\k}_2^2 $$
and obviously
$$ \norm{\x}_2^2=\norm{\x_\r}_2^2+\norm{\x_\i}_2^2+\norm{\x_\j}_2^2+\norm{\x_\k}_2^2. $$
And the~result follows after applying the $s$-restricted isometry inequalities to each vector $\x_i\in\R^n$ separately.
\end{proof}

The~next result is an~important tool in the~proof of Theorem~\ref{l1minthm}. Note that for quaternion vectors we are not able to obtain the~same estimate as in the~real case. However, the~enhanced version of the~polarization identity (Theorem~\ref{polar2}) -- which is the~key ingredient in the~proof -- allows us to decrease the~multiplicative constant from $2$ to $\sqrt2$.

\begin{lemma}\label{RIPip}
 Let $\delta_s$ be the~$s$-isometry constant for a~matrix $\Fi\in\R^{m\times n}$ for $s\in\{1,\ldots,n\}$. For any pair of $\x,\y\in\H^n$ with disjoint supports and such that $\norm{\x}_0\leq s_1$ and $\norm{\y}_0\leq s_2$, where $s_1+s_2\leq n$, we have that 
$$  \left|\ip{\Phi\x,\Phi\y}\right|\leq \sqrt2 \delta_{s_1+s_2}\norm{\x}_2\norm{\y}_2. $$
\end{lemma}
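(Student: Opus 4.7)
The plan is to combine the enhanced polarization identity (Theorem~\ref{polar2}) with the quaternion version of RIP established in the preceding lemma, exploiting the fact that $\Phi$ has real entries and therefore commutes with any quaternion scalar.

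First I would reduce to the normalized case $\norm{\x}_2=\norm{\y}_2=1$ by homogeneity, using that $\ip{\lambda\x,\lambda\y}=\lambda^2\ip{\x,\y}$ for real $\lambda$. Then I would write $\ip{\Phi\x,\Phi\y}=a+ub$ as in Theorem~\ref{polar2}, with $a,b\in\R$, $b\geq 0$, and $u$ an imaginary unit. Since $|a+ub|^2=a^2+b^2$, the target bound $\sqrt{2}\,\delta_{s_1+s_2}$ reduces to proving both $|a|\leq\delta_{s_1+s_2}$ and $b\leq\delta_{s_1+s_2}$ separately.

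For the real part, Polarization Identity~II gives $4a=\norm{\Phi(\x+\y)}_2^2-\norm{\Phi(\x-\y)}_2^2$. Since $\x$ and $\y$ have disjoint supports of sizes at most $s_1$ and $s_2$, the vectors $\x\pm\y$ are $(s_1+s_2)$-sparse with $\norm{\x\pm\y}_2^2=2$; applying the quaternion RIP from the previous lemma to both $\x+\y$ and $\x-\y$ sandwiches each squared norm between $2(1-\delta_{s_1+s_2})$ and $2(1+\delta_{s_1+s_2})$, yielding $|a|\leq\delta_{s_1+s_2}$ in the standard way.

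The slightly more delicate step is bounding $b$. Here the key observation is that $\Phi(u\y)=u\,\Phi\y$, because every entry of $\Phi$ is real and therefore commutes with $u$. This lets me rewrite the second half of Identity~II as $\tfrac{u}{4}\bigl(\norm{\Phi(\x+u\y)}_2^2-\norm{\Phi(\x-u\y)}_2^2\bigr)$. Since $u$ is a unit quaternion, $u\y$ has the same support and the same $\ell_2$-norm as $\y$, so the supports of $\x$ and $u\y$ remain disjoint and $\x\pm u\y$ is still $(s_1+s_2)$-sparse with squared norm~$2$. A second application of RIP then gives $b\leq\delta_{s_1+s_2}$, and combining the two estimates yields the lemma.

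The main point (and the reason for the improvement from the naive bound $2\delta_{s_1+s_2}$ to $\sqrt{2}\,\delta_{s_1+s_2}$) is that Identity~II involves a single auxiliary imaginary direction $u$ rather than all three units $\i,\j,\k$ appearing in Identity~I, so the final squared magnitude has only two contributions instead of four. No serious obstacle is anticipated; the one technical point requiring care is the commutation $\Phi(u\y)=u\,\Phi\y$, which rests essentially on $\Phi$ being a real matrix and is what makes the argument specific to the real-measurement setting considered here.
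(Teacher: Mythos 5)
Your proposal is correct and takes essentially the same route as the paper: normalize $\x$ and $\y$, apply Polarization Identity II to $\ip{\Fi\x,\Fi\y}=a+ub$, use the RIP on the $(s_1+s_2)$-sparse vectors $\x\pm\y$ and $\x\pm u\y$ (each of squared norm $2$) to bound $|a|$ and $b$ by $\delta_{s_1+s_2}$, and combine via $|a+ub|=\sqrt{a^2+b^2}$. The commutation $\Fi(u\y)=u\,\Fi\y$ that you single out is precisely the point the paper invokes with the phrase ``since $\Fi$ is real,'' so no gap remains.
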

\begin{proof}
First take two unit vectors $\x,\y\in\H^n$ satisfying the~assumptions of the~lemma. Since $\x$ and $\y$ have disjoint supports and $\norm{\x}_2=\norm{\y}_2=1$ we have that
$$ \norm{\x\pm\y}_2^2=\norm{\x}_2^2+\norm{\y}_2^2=2. $$
Moreover, for any quaternion $q\in\H$ with $|q|=1$,
$$ \norm{\x\pm q\y}_2^2=\norm{\x}_2^2+|q|^2\norm{\y}_2^2=2. $$

Denote $\ip{\Fi\x,\Fi\y}=a+ub$, where $a,b\in\R$, $b\geq0$, $u\in\H$, $\req(u)=0$, $|u|=1$. Applying the~polarization identity from Theorem~\ref{polar2} we get that
$$ \ip{\Fi\x,\Fi\y}=\frac{1}{4}\left(\norm{\Fi\x+\Fi\y}_2^2-\norm{\Fi\x-\Fi\y}_2^2\right)+\frac{u}{4}\left(\norm{\Fi\x+u\Fi\y}_2^2-\norm{\Fi\x-u\Fi\y}_2^2\right). $$
Since $\x\pm\y$ are $(s_1+s_2)$-sparse, 
$$ \left(1-\delta_{s_1+s_2}\right)\underbrace{\norm{\x\pm\y}_2^2}_{=2}\leq\norm{\Fi\x\pm\Fi\y}_2^2=\norm{\Fi(\x\pm\y)}_2^2\leq\left(1+\delta_{s_1+s_2}\right)\underbrace{\norm{\x\pm\y}_2^2}_{=2} $$
and therefore
$$ \norm{\Fi\x+\Fi\y}_2^2-\norm{\Fi\x-\Fi\y}_2^2\leq2\left(1+\delta_{s_1+s_2}\right)-2\left(1-\delta_{s_1+s_2}\right)=4\delta_{s_1+s_2}. $$

Similarly, since $\Fi$ is real and $\x\pm u\y$ are also $(s_1+s_2)$-sparse,
$$ \left(1-\delta_{s_1+s_2}\right)\underbrace{\norm{\x\pm u\y}_2^2}_{=2}\leq\norm{\Fi\x\pm u\Fi\y}_2^2 =\norm{\Fi(\x\pm u\y)}_2^2\leq\left(1+\delta_{s_1+s_2}\right)\underbrace{\norm{\x\pm u\y}_2^2}_{=2}, $$
hence 
$$ \norm{\Fi\x+u\Fi\y}_2^2-\norm{\Fi\x-u\Fi\y}_2^2\leq2\left(1+\delta_{s_1+s_2}\right)-2\left(1-\delta_{s_1+s_2}\right)=4\delta_{s_1+s_2}. $$

Finally
$$ \left|\ip{\x,\y}\right|\leq\frac{1}{4}\sqrt{4^2\delta_{s_1+s_2}^2+4^2\delta_{s_1+s_2}^2}=\sqrt2\,\delta_{s_1+s_2}. $$

Now, if $\x,\y\in\H^n$ are any vectors satisfying assumptions of the~lemma, applying the~above estimate we conclude that
$$ \left|\ip{\x,\y}\right|=\norm{\x}_2\norm{\y}_2\left|\ip{\frac{\x}{\norm{\x}_2},\frac{\y}{\norm{\y}_2}}\right|\leq\sqrt2\,\delta_{s_1+s_2}\norm{\x}_2\norm{\y}_2.  $$
\end{proof}

\section{Stable reconstruction from noisy data}

As we mentioned in the~introduction, our aim is to reconstruct a~quaternion signal from a~limited amount of its linear measurements with real coefficients. We will also assume the~presence of a~white noise with bounded $\ell_2$ quaternion norm. The~observables are, therefore, given by
$$ \y=\Fi\x+\e, \quad\textrm{where}\quad \x\in\H^n, \; \Fi\in\R^{m\times{n}}, \; \y\in\H^m \;\textrm{and}\; \e\in\H^n \textrm{ with } \norm{\e}_2\leq\eta $$
for some $m\leq n$ and $\eta\geq0$.

We will use the~following notation: for any $\h\in\H^n$ and a~set of indices $T\subset\{1,\ldots,n\}$, the~vector $\h_T\in\H^n$ is supported on~$T$ with entries
$$ \left(\h_T\right)_i=\left\{\begin{array}{cl} h_i & \textrm{if } i\in T \\ 0 & \textrm{otherwise} \end{array}\right., \quad \textrm{where} \quad \h=(h_1,\ldots,h_n)^T. $$
The~complement of $T\subset\{1,\ldots,n\}$ will be denoted by $T^c=\{1,\ldots,n\}\setminus{T}$ and the~symbol $\x|_s$ will be used for the~best $s$-sparse approximation of the~vector~$\x$, i.e. $\x|_s=\x_{T_0}$, where $T_0$ is the set of indices of $\x$~coordinates with the biggest quaternion norms.

The~following result is a~generalization of {\cite[Theorem~1.3]{cRIP}} to the~case of quaternion signals.

\begin{theorem}\label{l1minthm}
Suppose that $\Fi\in\R^{m\times n}$ satisfies the~$2s$-restricted isometry property with a~constant $\delta_{2s}<\frac{1}{3}$ and let $\eta\geq0$. Then, for any $\x\in\H^n$ and $\y=\Fi\x+\e$ with $\norm{\e}_2\leq\eta$, the~solution $\x^{\#}$ of the~problem
\begin{equation}\label{l1minproblem}
\argmin\limits_{\z\in\H^n} \norm{\z}_1 \quad\text{subject to}\quad\norm{\Fi\z-\y}_2\leq\eta
\end{equation}
satisfies 
\begin{equation} \label{main-ineq}
\norm{\x^{\#}-\x}_2 \leq \frac{C_0}{\sqrt{s}}\norm{\x-\x|_s}_1 + C_1\eta
\end{equation} with constants 
$$ C_0 = \frac{2(1+\delta_{2s})}{1-3\delta_{2s}} ,\quad C_1 = \frac{4\sqrt{1+\delta_{2s}}}{1-3\delta_{2s}}, $$
where $\x|_s$ denotes the~best $s$-sparse approximation of $\x$.
\end{theorem}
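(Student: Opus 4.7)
The plan is to mimic the real-scalar argument of Cand\`es~\cite{cRIP}, substituting Lemma~\ref{RIPip} in place of the classical cross-term bound $|\ip{\Fi\x,\Fi\y}|\leq\delta_{s_1+s_2}\norm{\x}_2\norm{\y}_2$ at every occurrence. Set $\h=\x^{\#}-\x$, let $T_0\subset\{1,\ldots,n\}$ be the support of $\x|_s$, partition $T_0^c$ into disjoint blocks $T_1,T_2,\ldots$ of size $s$ arranged so that the moduli $|h_i|$ are nonincreasing across blocks, and write $T_{01}=T_0\cup T_1$.

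I would first collect the two standard preparatory ingredients. The \emph{tube} estimate $\norm{\Fi\h}_2\leq 2\eta$ follows from the feasibility of $\x$ in~\eqref{l1minproblem} together with the triangle inequality for $\norm{\cdot}_2$. The \emph{cone} estimate $\norm{\h_{T_0^c}}_1\leq\norm{\h_{T_0}}_1+2\norm{\x-\x|_s}_1$ is extracted from $\norm{\x^{\#}}_1\leq\norm{\x}_1$ by splitting over $T_0$ and $T_0^c$; this real-scalar manipulation transcribes verbatim because $\norm{\cdot}_1$ on $\H^n$ is componentwise. A standard sorting argument, noting that every coordinate of $\h_{T_j}$ is dominated by the mean of $|\h_{T_{j-1}}|$, yields $\sum_{j\geq 2}\norm{\h_{T_j}}_2\leq s^{-1/2}\norm{\h_{T_0^c}}_1$.

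The heart of the proof is an estimate on $\norm{\h_{T_{01}}}_2$. I would start from the real-valued identity
\[ \norm{\Fi\h_{T_{01}}}_2^2=\req\ip{\Fi\h_{T_{01}},\Fi\h}-\sum_{j\geq 2}\req\ip{\Fi\h_{T_{01}},\Fi\h_{T_j}}, \]
bound the first term by $2\eta\sqrt{1+\delta_{2s}}\,\norm{\h_{T_{01}}}_2$ via Lemma~\ref{qCSineq}, the RIP and the tube, and bound each cross term by applying Lemma~\ref{RIPip} twice, to the disjointly supported pairs $(\h_{T_0},\h_{T_j})$ and $(\h_{T_1},\h_{T_j})$, each of total sparsity at most $2s$. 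Summing over $j\geq 2$ and using $\norm{\h_{T_0}}_2+\norm{\h_{T_1}}_2\leq\sqrt2\,\norm{\h_{T_{01}}}_2$ together with the sorting bound produces
\[ \sum_{j\geq 2}\bigl|\ip{\Fi\h_{T_{01}},\Fi\h_{T_j}}\bigr|\leq 2\delta_{2s}\,\norm{\h_{T_{01}}}_2\cdot s^{-1/2}\norm{\h_{T_0^c}}_1. \]
Combining with the lower RIP bound $(1-\delta_{2s})\norm{\h_{T_{01}}}_2^2\leq\norm{\Fi\h_{T_{01}}}_2^2$ and feeding in the cone/sorting inequality $s^{-1/2}\norm{\h_{T_0^c}}_1\leq\norm{\h_{T_{01}}}_2+2s^{-1/2}\norm{\x-\x|_s}_1$ gives
\[ (1-3\delta_{2s})\norm{\h_{T_{01}}}_2\leq 2\eta\sqrt{1+\delta_{2s}}+4\delta_{2s}\,s^{-1/2}\norm{\x-\x|_s}_1, \]
which is precisely where the hypothesis $\delta_{2s}<\frac{1}{3}$ is used. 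The final step $\norm{\h}_2\leq\norm{\h_{T_{01}}}_2+\norm{\h_{(T_{01})^c}}_2\leq 2\norm{\h_{T_{01}}}_2+2s^{-1/2}\norm{\x-\x|_s}_1$ then delivers~\eqref{main-ineq} with the advertised constants $C_0,C_1$.

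The main obstacle is exactly the extra factor $\sqrt2$ in Lemma~\ref{RIPip}: propagated through the cross-term estimate it upgrades the $\sqrt2\,\delta_{2s}$ of the classical proof to $2\delta_{2s}$, and this is what forces the admissible threshold from the sharp $\sqrt2-1$ down to $\frac{1}{3}$. A subsidiary care point is the non-commutativity of quaternion multiplication, but it has already been absorbed into Lemmas~\ref{qCSineq} and~\ref{RIPip}; using $\ip{a+b,a+b}=\ip{a,a}+2\req\ip{a,b}+\ip{b,b}$ together with the real-valuedness of $\norm{\Fi\cdot}_2^2$, the remainder of the argument reduces to real-variable bookkeeping.
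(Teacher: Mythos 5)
Your proposal is correct and follows essentially the same route as the paper's proof: tube and cone constraints, the block decomposition of $\h=\x^{\#}-\x$ with the sorting bound $\sum_{j\geq2}\norm{\h_{T_j}}_2\leq s^{-1/2}\norm{\h_{T_0^c}}_1$, the cross-term estimate via Lemma~\ref{RIPip} combined with $\norm{\h_{T_0}}_2+\norm{\h_{T_1}}_2\leq\sqrt2\,\norm{\h_{T_{01}}}_2$, and the same final assembly yielding the stated $C_0$, $C_1$ and the threshold $\delta_{2s}<\frac13$. The only cosmetic deviation is that you pass to real parts of the inner-product identity where the paper keeps the quaternion-valued identity and takes moduli; both are valid and give identical bounds.
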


\begin{proof}
First, note that, since $\x^{\#}$ is the~minimizer of~(\ref{l1minproblem}) and $\x$ is feasible, we get that
\begin{equation}\label{Fidifference}
	\norm{\Fi\left(\x^{\#}-\x\right)}_2\leq  \norm{\Fi\x^{\#}-\y}_2+\norm{\Fi\x-\y}_2\leq 2\eta.
\end{equation}
Denote 
$$ \h=\x^{\#}-\x $$
and decompose $\h$ into a~sum of vectors $\h_{T_0},\h_{T_1},\h_{T_2}\ldots$ in the~following way: let $T_0$ be the~set of indices of $\x$ coordinates with the~biggest quaternion norms; $T_1$ is the~set of indices of $\h_{T_0^c}$ coordinates with the~biggest norms, $T_1$ is the~set of indices of of $\h_{\left(T_0\cup T_1\right)^c}$ coordinates with the~biggest norms, etc. Then obviously $\h_{T_j}$ are $s$-sparse and have disjoint supports.

Notice that for $j\geq2$ we have that
$$ \norm{\h_{T_j}}_2^2=\sum_{i\in T_j}|h_i|^2\leq \sum_{i\in T_j}\norm{\h_{T_j}}_{\infty}^2=s\norm{\h_{T_j}}_{\infty}^2, $$
where $h_i$ are the~coordinates of~$\h$ and $\norm{\h_{T_j}}_{\infty}=\max\limits_{i\in{T_j}}|h_i|$.
Moreover, since all non-zero coordinates of~$\h_{T_{j-1}}$ have norms not smaller than non-zero coordinates of $\h_{T_j}$,
$$ \norm{\h_{T_j}}_{\infty}\leq\frac{1}{s}\sum_{i\in{T_{j-1}}}|h_i|=\frac{1}{s}\norm{\h_{T_{j-1}}}_1. $$
Hence, for $j\geq2$ we get that
$$ 	\norm{\h_{T_j}}_2\leq \sqrt{s}\norm{\h_{T_j}}_{\infty}\leq \frac{1}{\sqrt{s}}\norm{\h_{T_{j-1}}}_1, $$
which implies
\begin{equation}\label{sumnormhTj}
 \sum_{j\geq2}\norm{\h_{T_j}}_2\leq \frac{1}{\sqrt{s}}\sum_{j\geq1}\norm{\h_{T_j}}_1\leq \frac{1}{\sqrt{s}}\norm{\h_{T_0^c}}_1.
\end{equation}
Finally 
\begin{equation}\label{normhT01c}
	\norm{\h_{\left(T_0\cup T_1\right)^c}}_2=\norm{\sum_{j\geq2}\h_{T_j}}_2\leq \sum_{j\geq2}\norm{\h_{T_j}}_2\leq \frac{1}{\sqrt{s}}\norm{\h_{T_0^c}}_1.
\end{equation}

Observe that $\norm{\h_{T_0^c}}_1$ can not be to large since $\norm{\x^{\#}}_1=\norm{\x+\h}_1$ is minimal. Indeed,
$$ \norm{\x}_1\geq\norm{\x+\h}_1= \norm{\x_{T_0}+\h_{T_0}}_1+ \norm{\x_{T_0^c}+\h_{T_0^c}}_1 \geq \norm{\x_{T_0}}_1-\norm{\h_{T_0}}_1-\norm{\x_{T_0^c}}_1+\norm{\h_{T_0^c}}_1, $$
hence
$$ \norm{\x_{T_0^c}}_1=\norm{\x}_1-\norm{\x_{T_0}}_1\geq -\norm{\h_{T_0}}_1-\norm{\x_{T_0^c}}_1+\norm{\h_{T_0^c}}_1 $$
and therefore
\begin{equation}\label{normhT0c}
	\norm{\h_{T_0^c}}_1\leq \norm{\h_{T_0}}_1+2\norm{\x_{T_0^c}}_1.
\end{equation}

Now, the Cauchy-Schwarz inequality immediately implies that
$$ \norm{h_{T_0}}_1=\sum_{i\in T_0}|h_i|\cdot1\leq\sqrt{\sum_{i\in T_0}|h_i|^2}\cdot \sqrt{\sum_{i\in T_0}1^2}=\sqrt{s}\norm{\h_{T_0}}_2 $$
From this, (\ref{normhT01c}) and (\ref{normhT0c}) we conclude that
\begin{equation}\label{ineq1} 
	\norm{\h_{\left(T_0\cup T_1\right)^c}}_2\leq \frac{1}{\sqrt{s}}\norm{\h_{T_0^c}}_1 \leq \frac{1}{\sqrt{s}}\norm{\h_{T_0}}_1+\frac{2}{\sqrt{s}}\norm{\x_{T_0^c}}_1\leq 
	\norm{\h_{T_0}}_2+2\epsilon,
\end{equation}
where $\epsilon=\frac{1}{\sqrt{s}}\norm{\x_{T_0^c}}_1=\frac{1}{\sqrt{s}}\norm{\x-\x|_s}_1$. This is the~first ingredient of the~final estimate.

\medskip
In what follows we are going to estimate the~remaining component, i.e. $\norm{\h_{\left(T_0\cup T_1\right)^c}}_2$. Since
$$ \Fi\h_{T_0\cup T_1}= \Fi\left(\h-\sum_{j\geq2}\h_{T_j}\right)=\Fi\h-\sum_{j\geq2}\Fi\h_{T_j}, $$
using linearity of $\ip{\cdot,\cdot}$, we get that
$$\aligned
	\norm{\Fi\h_{T_0\cup T_1}}_2^2&=\ip{\Fi\h_{T_0\cup T_1},\Fi\h_{T_0\cup T_1}}=\ip{\Fi\h_{T_0\cup T_1},\Fi\h}-\sum_{j\geq2}\ip{\Fi\h_{T_0\cup T_1},\Fi\h_{T_j}}\\
		&=\ip{\Fi\h_{T_0\cup T_1},\Fi\h}-\sum_{j\geq2}\ip{\Fi\h_{T_0},\Fi\h_{T_j}}-\sum_{j\geq2}\ip{\Fi\h_{T_1},\Fi\h_{T_j}}.
\endaligned$$

Estimate of the~first element follows from Lemma~\ref{qCSineq}, (\ref{Fidifference}) and RIP
\begin{equation}\label{FihT01Fih}
	\left|\ip{\Fi\h_{T_0\cup T_1},\Fi\h}\right|\leq\norm{\Fi\h_{T_0\cup T_1}}_2\cdot\norm{\Fi\h}_2 \leq \sqrt{1+\delta_{2s}}\norm{\h_{T_0\cup T_1}}_2\cdot2\eta.
\end{equation}
For the~remaining terms recall that $\h_{T_j}$ are $s$-sparse with disjoint supports and apply Lemma~\ref{RIPip}
$$ \left|\ip{\Fi\h_{T_0},\Fi\h_{T_j}}\right|\leq\sqrt2\,\delta_{2s}\cdot\norm{\h_{T_0}}_2\cdot\norm{\h_{T_j}}_2, \quad j\geq2, $$
$$ \left|\ip{\Fi\h_{T_1},\Fi\h_{T_j}}\right|\leq\sqrt2\,\delta_{2s}\cdot\norm{\h_{T_1}}_2\cdot\norm{\h_{T_j}}_2, \quad j\geq2. $$

Since $T_0$ and $T_1$ are disjoint, $\norm{\h_{T_0\cup T_1}}_2^2=\norm{\h_{T_0}}_2^2+\norm{\h_{T_1}}_2^2$ and therefore
$$ \norm{\h_{T_0}}_2+\norm{\h_{T_1}}_2\leq \sqrt2\norm{\h_{T_0\cup T_1}}_2 $$
since for any $a,b\in\R$ we have $(a+b)^2\leq2(a^2+b^2)$. Hence, using the~RIP, (\ref{FihT01Fih}) and (\ref{sumnormhTj}),
$$\aligned 
	\left(1-\delta_{2s}\right)\norm{\h_{T_0\cup T_1}}_2^2 &\leq \norm{\Fi\h_{T_0\cup T_1}}_2^2 \\
	&\leq \sqrt{1+\delta_{2s}}\norm{\h_{T_0\cup T_1}}_2\cdot2\eta+ \sqrt2\,\delta_{2s}\cdot\left(\norm{\h_{T_0}}_2+\norm{\h_{T_1}}_2\right)\sum_{j\geq2}\norm{\h_{T_j}}_2\\
	&\leq \left(2\sqrt{1+\delta_{2s}}\cdot\eta+\frac{2\delta_{2s}}{\sqrt{s}}\norm{\h_{T_0^c}}_1\right)\norm{\h_{T_0\cup T_1}}_2,
\endaligned$$
which implies that
\begin{equation}\label{normhT01}	
	\norm{\h_{T_0\cup T_1}}_2\leq \frac{2\sqrt{1+\delta_{2s}}}{1-\delta_{2s}}\cdot\eta+\frac{2\delta_{2s}}{1-\delta_{2s}}\cdot\frac{\norm{\h_{T_0^c}}_1}{\sqrt{s}}. 
\end{equation}
This, together with (\ref{normhT0c}), gives the~following estimate
$$ \norm{\h_{T_0\cup T_1}}_2\leq \alpha\cdot\eta+\beta\cdot\frac{\norm{\h_{T_0}}_1}{\sqrt{s}}+2\beta\cdot\frac{\norm{\x_{T_0^c}}_1}{\sqrt{s}}, $$
where
$$ \alpha=\frac{2\sqrt{1+\delta_{2s}}}{1-\delta_{2s}}\qquad \textrm{and}\qquad \beta=\frac{2\delta_{2s}}{1-\delta_{2s}}. $$
Since $\norm{\h_{T_0}}_1\leq\sqrt{s}\norm{\h_{T_0}}_2\leq\sqrt{s}\norm{\h_{T_0\cup T_1}}_2$, therefore
$$ \norm{\h_{T_0\cup T_1}}_2\leq \alpha\cdot\eta+\beta\cdot\norm{\h_{T_0\cup T_1}}_2+2\beta\cdot\epsilon,$$
where recall that $\epsilon=\frac{1}{\sqrt{s}}\norm{\h_{T_0^c}}_2$, hence
\begin{equation}\label{ineq2}
	\norm{\h_{T_0\cup T_1}}_2\leq\frac{1}{1-\beta}\left(\alpha\cdot\eta+2\beta\cdot\epsilon\right),
\end{equation}
as long as $\beta<1$ which is equivalent to $\delta_{2s}<\frac13$.

Finally, (\ref{ineq1}) and (\ref{ineq2}) imply the main result
$$\aligned
	\norm{h}_2&\leq\norm{\h_{T_0\cup T_1}}_2+\norm{\h_{(T_0\cup T_1)^c}}_2\leq \norm{\h_{T_0\cup T_1}}_2+\norm{\h_{T_0}}_2+2\epsilon \\
		&\leq 2\norm{\h_{T_0\cup T_1}}_2+2\epsilon\leq \frac{2\alpha}{1-\beta}\cdot\eta+\left(\frac{4\beta}{1-\beta}+2\right)\cdot\epsilon
\endaligned $$
and the constants in the~statement of the~theorem equal
$$ C_0=\frac{4\beta}{1-\beta}=2\frac{1+\beta}{1-\beta}=2\frac{1+\delta_{2s}}{1-3\delta_{2s}} \quad \textrm{and}\quad 
		C_1=\frac{2\alpha}{1-\beta}=\frac{4\sqrt{1+\delta_{2s}}}{1-3\delta_{2s}}.$$
\end{proof}

\section{Stable reconstruction from noiseless data}

In this section we will assume that our observables are exact, i.e.
$$ \y=\Fi\x, \quad\textrm{where}\quad \x\in\H^n, \; \Fi\in\R^{m\times{n}}, \; \y\in\H^m.$$
The~undermentioned result is a~natural corollary of~Theorem~\ref{l1minthm} for $\eta=0$.

\begin{corollary}\label{l1mincor}
	Let $\Fi\in\R^{m\times n}$ satisfies the~$2s$-restricted isometry property with a~constant $\delta_{2s}<\frac{1}{3}$. Then for any $\x\in\H^n$ and $\y=\Fi\x\in\H^m$, the~solution $\x^{\#}$ of the~problem
\begin{equation}\label{l1minexact}
\argmin\limits_{\z\in\H^n} \norm{\z}_1 \quad\text{subject to}\quad \Fi\z=\y
\end{equation}
satisfies 
\begin{equation} \label{exact-ineq1}
\norm{\x^{\#}-\x}_1 \leq C_0\norm{\x-\x|_s}_1 
\end{equation} 
\begin{equation} \label{exact-ineq2}
\norm{\x^{\#}-\x}_2 \leq \frac{C_0}{\sqrt{s}}\norm{\x-\x|_s}_1 
\end{equation} 
with constant $C_0$ as in the~Theorem~\ref{l1minthm}. In particular, if $\x$ is $s$-sparse and there is no noise, then the~reconstruction by $\ell_1$ minimization is exact.
\end{corollary}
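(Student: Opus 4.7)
The corollary is essentially a matter of specialization and bookkeeping, so my plan is to exploit the machinery already developed in the proof of Theorem~\ref{l1minthm}. Since the feasibility constraint $\Fi\z=\y$ is the $\eta=0$ case of $\norm{\Fi\z-\y}_2\leq\eta$, the original signal $\x$ is feasible for (\ref{l1minexact}), which is all that was required of $\x$ in the proof of Theorem~\ref{l1minthm}. Hence every estimate derived there applies verbatim.

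For (\ref{exact-ineq2}) I would simply substitute $\eta=0$ into the conclusion \eqref{main-ineq} of Theorem~\ref{l1minthm}: the term $C_1\eta$ vanishes, leaving exactly $\norm{\x^{\#}-\x}_2\leq \frac{C_0}{\sqrt s}\norm{\x-\x|_s}_1$.

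For (\ref{exact-ineq1}) I would reuse the internal estimates rather than quoting the final conclusion. Writing $\h=\x^{\#}-\x$, I would split $\norm{\h}_1=\norm{\h_{T_0}}_1+\norm{\h_{T_0^c}}_1$ and apply (\ref{normhT0c}) to obtain $\norm{\h}_1\leq 2\norm{\h_{T_0}}_1+2\norm{\x_{T_0^c}}_1$. Cauchy--Schwarz on $T_0$ then gives $\norm{\h_{T_0}}_1\leq\sqrt s\,\norm{\h_{T_0}}_2\leq\sqrt s\,\norm{\h_{T_0\cup T_1}}_2$. Specializing (\ref{ineq2}) to $\eta=0$ yields $\norm{\h_{T_0\cup T_1}}_2\leq\frac{2\beta}{1-\beta}\,\epsilon$ with $\epsilon=\frac{1}{\sqrt s}\norm{\x-\x|_s}_1$, so altogether $\norm{\h}_1\leq\bigl(\frac{4\beta}{1-\beta}+2\bigr)\norm{\x-\x|_s}_1$. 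The arithmetic simplification $\frac{4\beta}{1-\beta}+2=\frac{2(1+\beta)}{1-\beta}=C_0$, which was already carried out at the end of the proof of Theorem~\ref{l1minthm}, delivers the desired constant.

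For the final sentence, I would observe that when $\x$ is $s$-sparse one has $\x|_s=\x$, hence $\norm{\x-\x|_s}_1=0$, so (\ref{exact-ineq2}) (or (\ref{exact-ineq1})) forces $\x^{\#}=\x$. There is no real obstacle here: the only mild point of care is to verify that the hypothesis $\delta_{2s}<\frac13$ (equivalently $\beta<1$) is precisely what is needed to make both (\ref{ineq2}) and the algebraic identity for $C_0$ valid, which is inherited directly from Theorem~\ref{l1minthm}.
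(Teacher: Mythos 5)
Your proposal is correct and follows essentially the same route as the paper: \eqref{exact-ineq2} is obtained by setting $\eta=0$ in Theorem~\ref{l1minthm}, and \eqref{exact-ineq1} by recycling the internal estimates of its proof (the bound \eqref{normhT0c}, Cauchy--Schwarz on $T_0$, and the $\eta=0$ control of $\norm{\h_{T_0\cup T_1}}_2$). The only cosmetic difference is that the paper closes the self-referential inequality in the variable $\norm{\h_{T_0^c}}_1$ starting from \eqref{normhT01}, whereas you substitute the already-closed bound \eqref{ineq2}; both yield the same constant $C_0=2(1+\beta)/(1-\beta)$.
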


\begin{proof}
	Inequality (\ref{exact-ineq2}) follows directly from Theorem~\ref{l1minthm} for $\eta=0$. The~result for sparse signals is obvious since in this case $\x=\x|_s$. We only need to prove~(\ref{exact-ineq1}).
	
	We will use the same notation as in the~proof of Theorem~\ref{l1minthm}. Recall that
	$$ \norm{\h_{T_0}}_1\leq\sqrt{s}\norm{\h_{T_0}}_2\leq\sqrt{s}\norm{\h_{T_0\cup T_1}}_2 $$
	which together with (\ref{normhT01}) for $\eta=0$ implies
$$ \norm{\h_{T_0}}_1\leq \frac{2\delta_{2s}}{1-\delta_{2s}}\cdot\norm{\h_{T_0^c}}_1  $$	
Using this and (\ref{normhT0c}), denoting again $\beta=\frac{2\delta_{2s}}{1-\delta_{2s}}$, we get that
$$ \norm{\h_{T_0^c}}_1\leq \beta\norm{\h_{T_0^c}}_1+2\norm{\x_{T_0^c}}_1, \qquad \textrm{hence}\qquad \norm{\h_{T_0^c}}_1\leq \frac{2}{1-\beta}\norm{\x_{T_0^c}}_1. $$
Finally, we obtain the~following estimate on the~$\ell_1$ norm of the~vector $\h=\x^{\#}-\x$
$$ \norm{\h}_1=\norm{\h_{T_0}}_1+\norm{\h_{T_0^c}}_1\leq (1+\beta)\norm{\h_{T_0^c}}_1\leq \underbrace{2\frac{1+\beta}{1-\beta}}_{=C_0}\norm{\x_{T_0^c}}_1, $$
which finishes the proof.
\end{proof}

We conjecture that the~ requirement $\delta_{2s}<\frac{1}{3}$ is not optimal -- there are known refinements of this condition for real signals (see e.g. {\cite[Chapter~6]{introCS}} for references). However, the~authors of~\cite{cz} constructed examples of $s$-sparse real signals which can not be uniquely reconstructed via $\ell_1$ minimization for $\delta_s>\frac13$. This gives an~obvious upper bound for~$\delta_{s}$ also for the~general quaternion case.

\section{Numerical experiment}

Inspired by the~article~\cite{l1minqs} we performed numerical experiments for the~case considered in this~paper, i.e. when the~measurement matrix $\Fi$ is real and satisfies (with overwhelming probability) the~restricted isometry property and signals are quaternion vectors. We applied the~algorithm described in~\cite{l1minqs} which express the~$\ell_1$ quaternion norm minimization problem in terms of the~second-order cone programming (SOCP). 

As it was also pointed out in~\cite{l1minqs}, our problem is equivalent to 
\begin{equation} \label{num:01}
	\argmin\limits_{t\in\R_+} t \quad\text{subject to}\quad \y = \Fi\x,\quad \norm{\x}_1\leq t.
\end{equation} 
We decompose $$ t=\sum\limits_{k=1}^n t_k,\; \textrm{ where }\; t_k\in\R_+, $$ 
and
$$	\x = \x_\r + \i\x_\i + \j\x_\j + \k\x_\k, \;\textrm{ where }\; \x_\r,\x_\i,\x_\j,\x_\k\in\R^n,$$ 
and denote
$$	\begin{array}{c} \x_\r = (x_{\r,1},x_{\r,2},\ldots,x_{\r,n})^T, \\
	\x_\i = (x_{\i,1},x_{\i,2},\ldots,x_{\i,n})^T, \\
	\x_\j = (x_{\j,1},x_{\j,2},\ldots,x_{\j,n})^T, \\
	\x_\k = (x_{\k,1},x_{\k,2},\ldots,x_{\k,n})^T. \end{array}$$
We also denote $\Fi = (\Ffi_1,\ldots,\Ffi_n)$, where $\Ffi_k\in\R^m$ for $k\in\{1,2,\ldots,n\}$. Then, we can write the second constraint from \eqref{num:01} as 
\begin{equation} \label{num:04}
	\norm{(x_{\r,k},x_{\i,k},x_{\j,k},x_{\k,k})^T}_2 \leq t_k\quad\textrm{for}\quad k\in\{1,2,\ldots,n\}.
\end{equation} 
This allows us to express our optimization problem \eqref{num:01} in the~real-vector setup in the~following way
\begin{equation}\aligned\label{num:05}
	\argmin\limits_{\tilde{\x}\in\R^n} \C^T\tilde{x} \quad\text{subject to}\quad &\tilde{\y} = \tilde{\Fi}\tilde{\x} \quad\\
\textrm{and}\quad &\norm{(x_{\r,k},x_{\i,k},x_{\j,k},x_{\k,k})^T}_2 \leq t_k,\quad k\in\{1,2,\ldots,n\},
\endaligned \end{equation} 
where 
\begin{equation}\aligned\label{num:06}
	\tilde{\x} &= (t_1,x_{\r,1},x_{\i,1},x_{\j,1},x_{\k,1},\ldots,t_n,x_{\r,n},x_{\i,n},x_{\j,n},x_{\k,n})^T\in\R^{5n},\\ 
	\C &= (1,0,0,0,0,\ldots,1,0,0,0,0)^T\in\R^{5n},\\ 
	\tilde{\y} &= (\y_\r^T,\y_\i^T,\y_\j^T,\y_\k^T)^T\in\R^{4m}, \\ 
	\tilde{\Fi} &= \left( \begin{array} {ccccccccccc}
		\0 & \Ffi_1 & \0 & \0 & \0 & \ldots & \0 & \Ffi_n & \0 & \0 & \0 \\
		\0 & \0 & \Ffi_1 & \0 & \0 & \ldots & \0 & \0 & \Ffi_n & \0 & \0 \\
		\0 & \0 & \0 & \Ffi_1 & \0 & \ldots & \0 & \0 & \0 & \Ffi_n & \0 \\
		\0 & \0 & \0 & \0 & \Ffi_1 & \ldots & \0 & \0 & \0 & \0 & \Ffi_n 
	\end{array}\right)\in\R^{4m\times 5n}. 
\endaligned \end{equation} 
This is a~standard form of the~SOCP and we solved it using the~Matlab toolbox SeDuMi~\cite{SeDuMi}. Finally, the~quaternion signal $\x^{\#}$, which is the~solution of~\eqref{num:01}, can easily be obtained from $\tilde{\x}$.

Our program was carried out on a standard PC machine, with Microsoft Windows 8.1 Pro system with Intel(R) Core(TM) i7-4790 CPU (3.60GHz) and 8GB RAM. The~following algorithm was implemented in Matlab R2011b: \begin{enumerate}
\item Fix constants $n=128$ (length of the signal $\x$) and $m$ (number of observables, i.e. size of the~vector~$\y$) and generate the measurement matrix $\Fi\in\R^{m\times n}$ with random entries from i.i.d. standard normal distribution $\mathcal{N}(0,1)$; 
\item Choose the~sparsity $s\leq\frac{n}{2}$, select a~support set $T$ ($\#T=s$) uniformly at random and generate a~vector $\x\in\H^n$ supported on $T$ with i.i.d. standard normal distribution (in the~quaternion $\ell_2$-norm sense with independent components); 
\item Compute observables $\y=\Fi\x$, $\y\in\H^m$;
\item Construct vectors $\tilde{\x}$, $\C$, $\tilde{\y}$ and matrix $\tilde{\Fi}$ as in \eqref{num:06}; 
\item Call the SeDuMi toolbox to solve SOCP problem formulated in \eqref{num:05} and calculate the~quaternion reconstructed vector $\x^{\#}$;
\item Compute the~error of reconstruction, i.e. $\norm{\x^{\#}-\x}_2$;
\item For each pair of $n$ and $s$ perform 100 experiments and save errors of each reconstruction and number of perfect reconstructions (we claim that the~reconstruction is perfect if $\norm{\x^{\#}-\x}_2\leq 10^{-7}$).
\end{enumerate} 

Next, we performed the same experiment for more general case, i.e. $\Fi\in\H^{m\times n}$, as in~\cite{l1minqs}. Fig.~\ref{fig:01}(a) shows how the~percentage of the~perfect recovery depends on the~number of measurements~$m$ and the~sparsity~$s$ for $n=128$. Fig. \ref{fig:01}(b) shows the same results for the general case $\Fi\in\H^{m\times n}$. We can see, for example, that for $s\leq\frac{m}{4}$ and $m=32$ the recovery rate is greater than $95\%$ in the~first experiment. Notice that this result is slightly worse than in the~general case. So far we do not know the~direct reason for this observation, however, it may be that the~random quaternion matrix has better properties than the~random real matrix (where three components are fixed to be zeros and only the~real part is chosen at random). We plan to further study this issue in future research.

\begin{figure}[h!]
\centering
	\begin{minipage}{7.5cm}
	\centering
 		\ifpdf
		  \includegraphics[width=7.5cm]{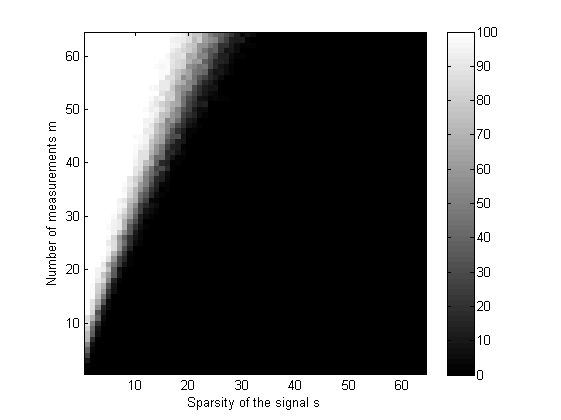}\\ (a)
		\else
		  \includegraphics[width=7.5cm]{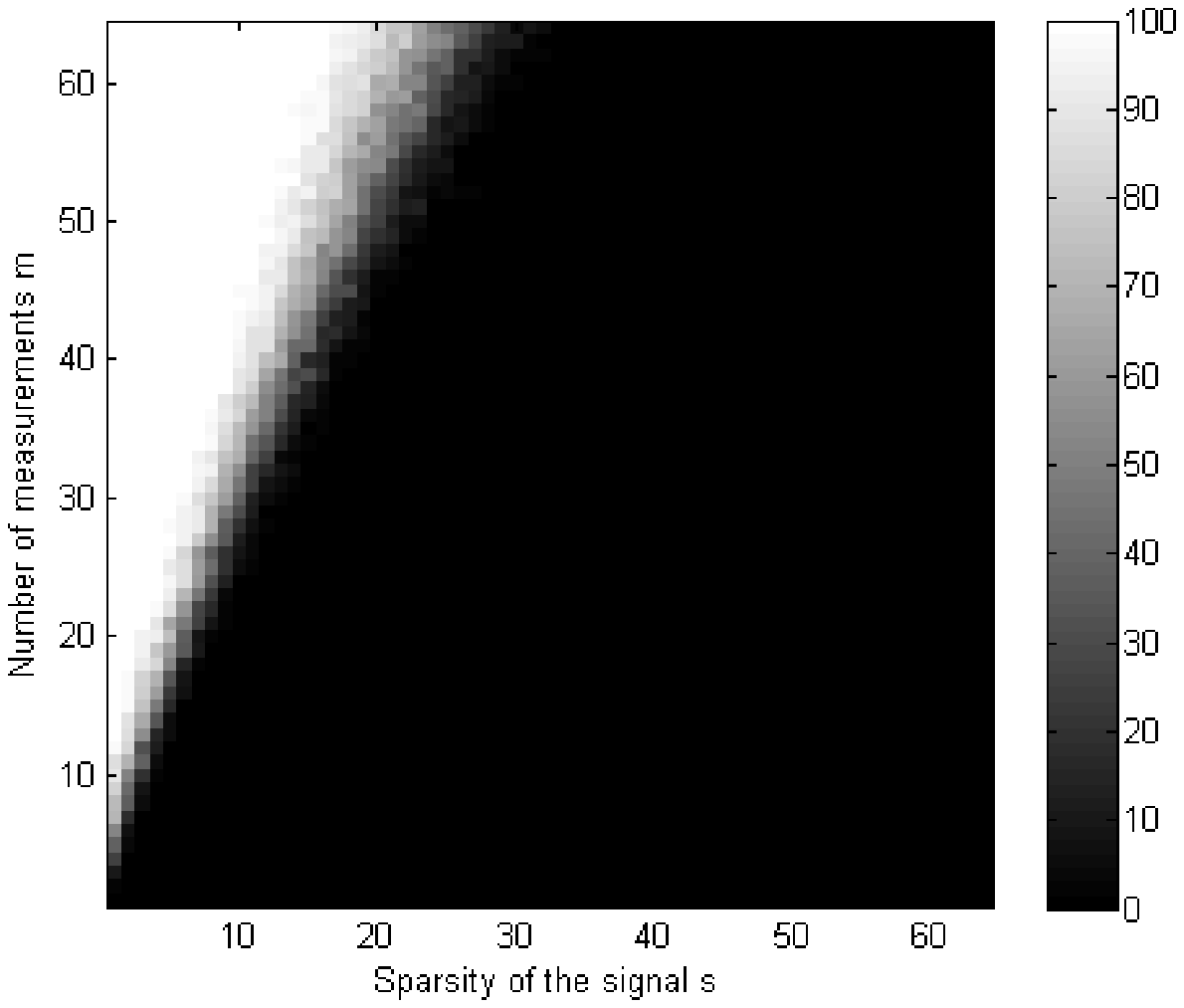}\\ (a)
		\fi
 	\end{minipage}
 	\begin{minipage}{7.5cm}
 	\centering
 		\ifpdf
		  \includegraphics[width=7.5cm]{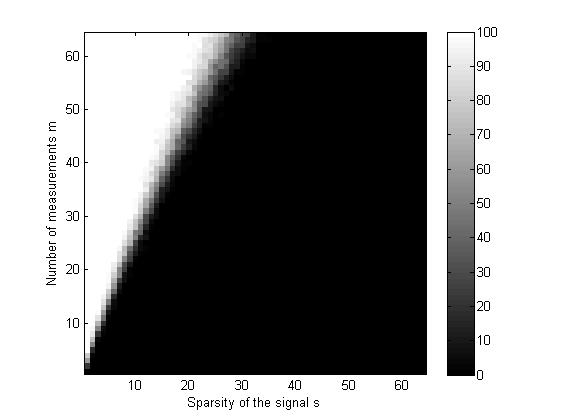}\\ (b)
		\else
		  \includegraphics[width=7.5cm]{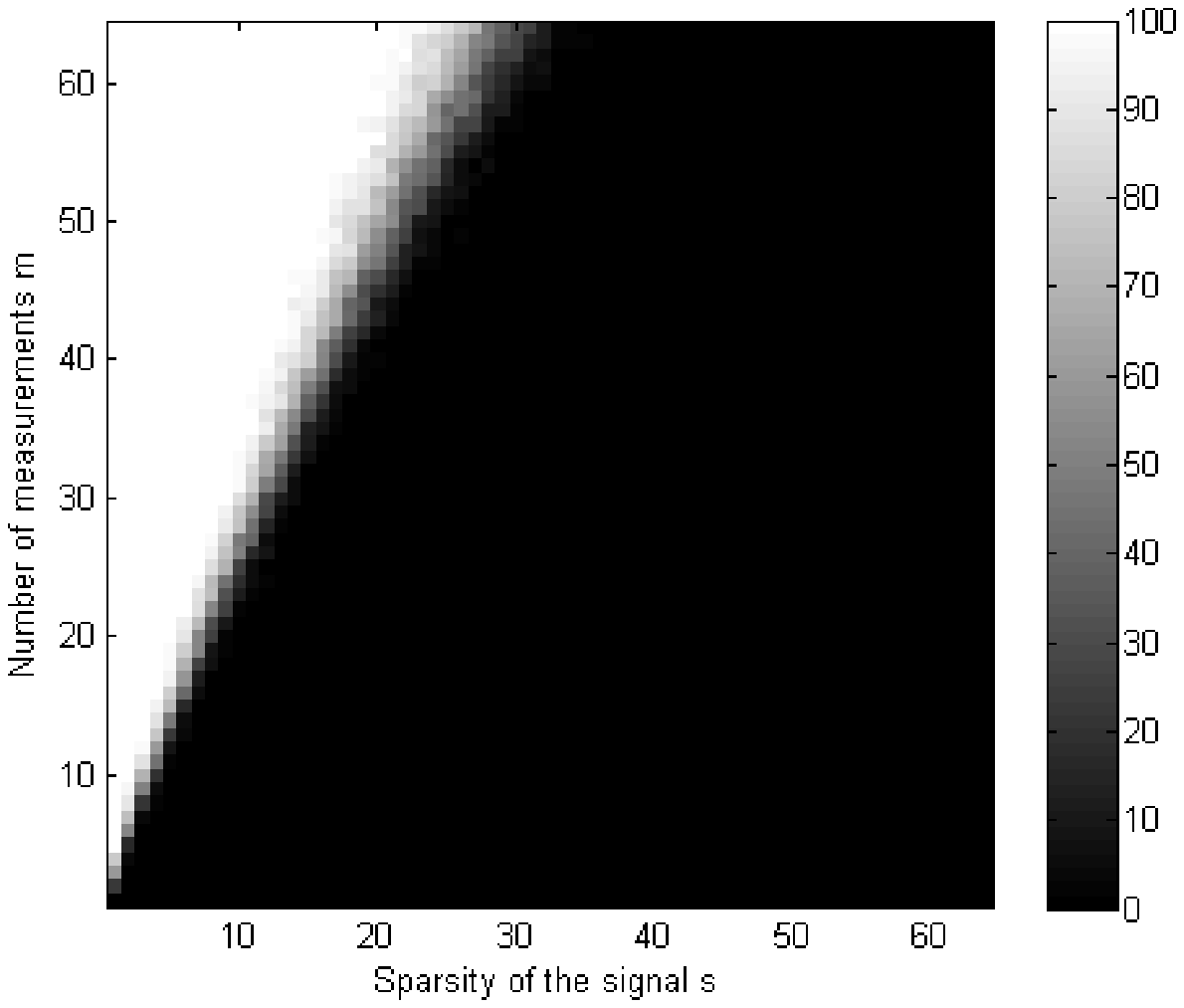}\\ (b)
		\fi
 	\end{minipage}
	\caption{\label{fig:01} Results of the recovery experiment for $n=128$. The image intensity stands for the percentage of perfect reconstructions. (a) The case for $\Fi\in\R^{m\times n}$. (b) The case for $\Fi\in\H^{m\times n}$.}
\end{figure}

To further illustrate the results formulated in Theorem~\ref{l1minthm} and Corollary~\ref{l1mincor} we performed another experiment. We fixed constants $n=256$ and $m=32$ and generated the measurement matrix $\Fi\in\R^{m\times n}$ with random entries from i.i.d. standard normal distribution and an~arbitrary vector $\x\in\H^n$ (not assuming sparsity) with random quaternion entries from i.i.d. standard normal distribution (in the~quaternion $\ell_2$-norm sense with independent components). We performed the~reconstruction of the vector $\x$, using the~algorithm described above, and calculated errors of reconstruction $\norm{\x^{\#}-\x}_1$ and $\norm{\x^{\#}-\x}_2$. We used the~inequalities \eqref{exact-ineq1} and \eqref{exact-ineq2} to obtain a~lower bound on the~constant $C_0$ as a~function of~$s$ (Fig.~\ref{fig:02}). 
We repeated the~experiment for various choices of $m$ and $n$ but the~results where comparable.

As we see the~results slightly differ (which is not surprising since we use \eqref{exact-ineq2} to prove~\eqref{exact-ineq1}), but we still obtain a~lower bound for $C_0$ as the~maximum of those two values (for arbitrary $s$). Observe that, as expected from the~statement of Theorem~\ref{l1minthm} and Corollary~\ref{l1mincor}, the~dependence on~$s$ is monotone. Note also that the~empirical bound on~$C_0$ is smaller than~$2$ for all~$s$, whereas the~theoretical formula gives $C_0>2$. We suspect, therefore, that our result is not sharp and can be improved.

\begin{figure}[h!]
\centering
	\begin{minipage}{7.5cm}
	\centering
 		\ifpdf
		  \includegraphics[width=7.5cm]{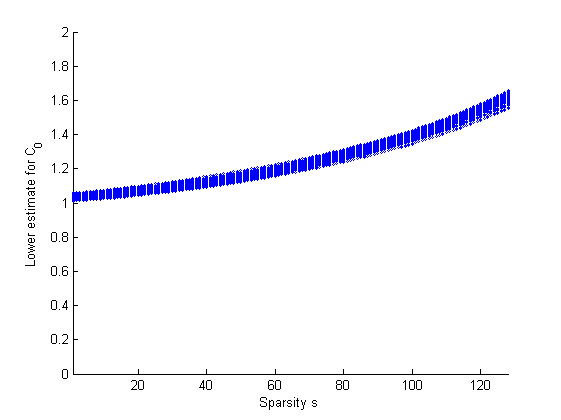}\\ (a)
		\else
		  \includegraphics[width=7.5cm]{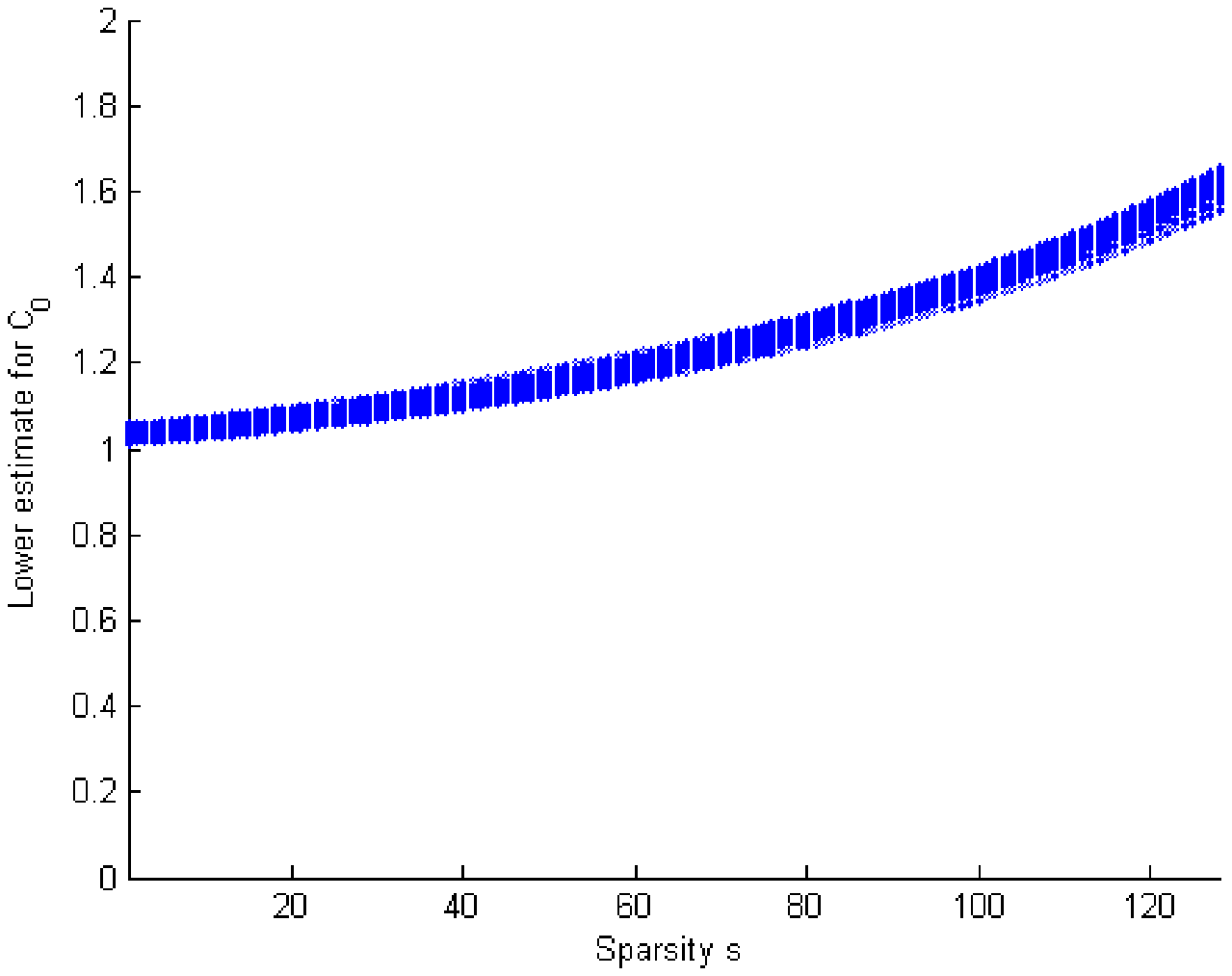}\\ (a)
		\fi
 	\end{minipage}
 	\begin{minipage}{7.5cm}
 	\centering
 		\ifpdf
		  \includegraphics[width=7.5cm]{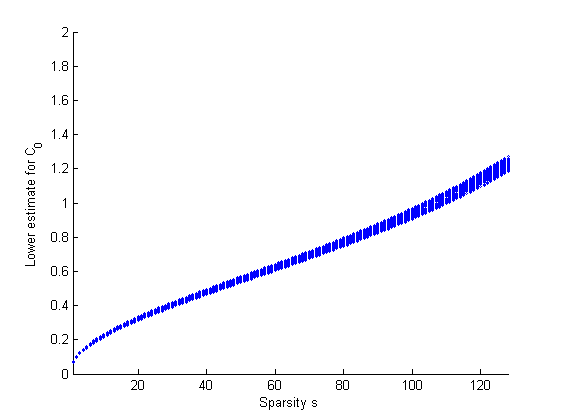}\\ (b)
		\else
		  \includegraphics[width=7.5cm]{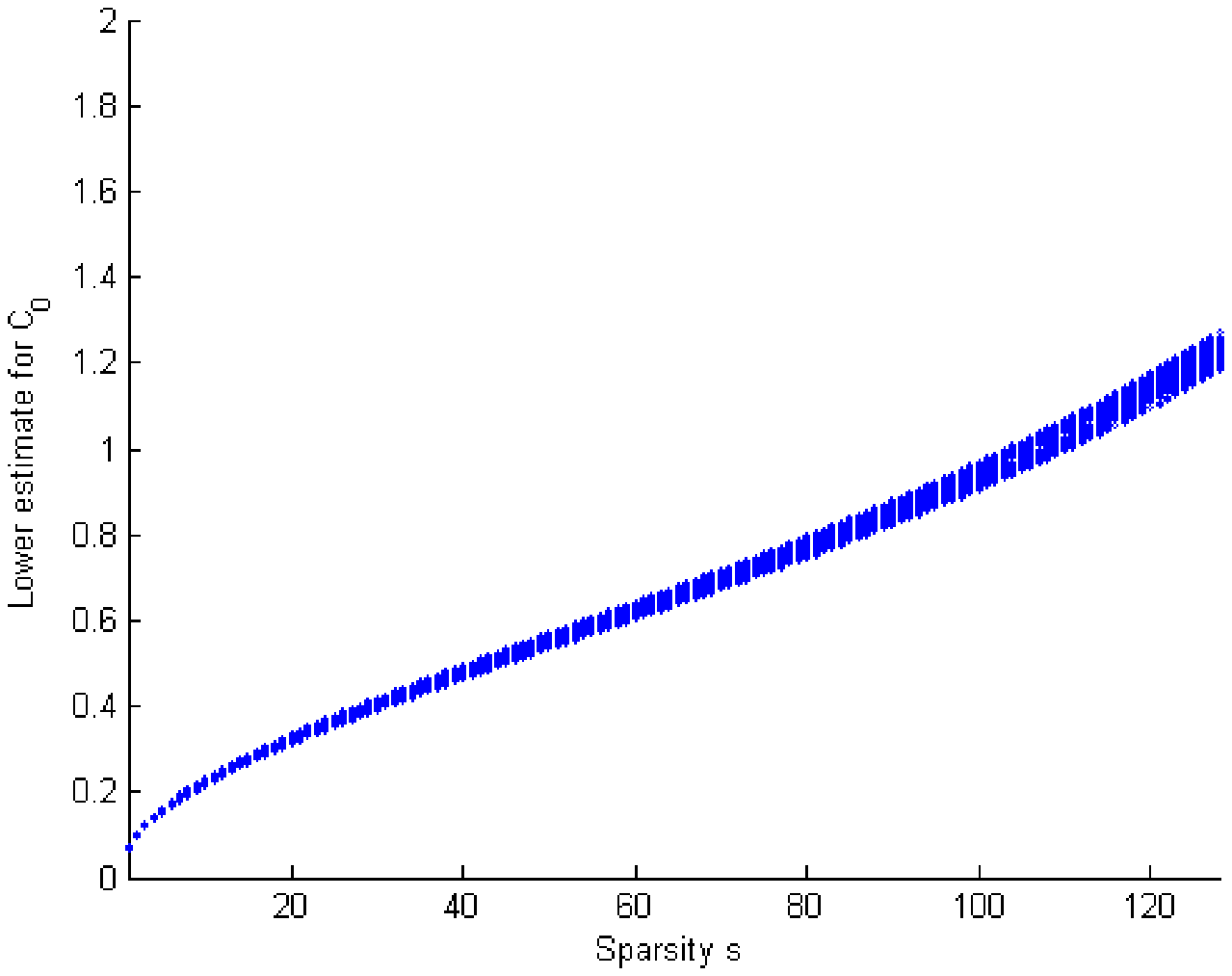}\\ (b)
		\fi
 	\end{minipage}
	\caption{\label{fig:02} Lower estimates for the constant $C_0$ in Corollary~\ref{l1mincor}, experiment performed for $n=256$ and $m=32$, results for $s=1,\ldots,128$. (a)~Results obtained from inequality~\eqref{exact-ineq1}. (b)~Results obtained from inequality~\eqref{exact-ineq2}.}
\end{figure}

\bigskip\bigskip
\noindent
\textbf{Acknowledgments}

The~research was supported by the~National Science Centre (Poland) based on the~decision DEC-2011/03/B/ST7/03649.

\bigskip

\end{document}